\theoremstyle{change}%
\newtheorem{definition}{Definition}[section]%
\newtheorem{theorem}[definition]{Theorem}%
\newtheorem{lemma}[definition]{Lemma}%
\newtheorem{VP}[definition]{Verification Problem}%
{\theorembodyfont{\rmfamily} \newtheorem{remark}[definition]{Remark}}%
\newenvironment{proof}{{\bf Proof:}}{\qquad \hspace*{\fill} $\Box$}%
\newcommand{\bx}{x}
\newcommand{\fS}{\mathfrak S}
\newcommand{\balpha}{{\boldsymbol \alpha}}
\newcommand{\dist}{\operatorname{dist}}
\newcommand{\co}{\operatorname{co}}
\newcommand{\cA}{{\mathcal A}}
\newcommand{\cD}{{\mathcal D}}
\newcommand{\cH}{{\mathcal H}}
\newcommand{\cL}{{\mathcal L}}
\newcommand{\cR}{{\mathcal R}}
\newcommand{\cT}{{\mathcal T}}
\newcommand{\cV}{{\mathcal V}}
\newcommand{\T}{{\mathfrak S}}
\newcommand{\diam}{\mathrm{diam}}
\newcommand{\R}{\mathbb{R}}%
\newcommand{\Z}{\mathbb{Z}}%
\newcommand{\Sb}{\mathbb{S}}
\newcommand{\N}{\mathbb{N}}%
\newcommand{\CPA}{\operatorname{CPA}}
\newcommand{\norm}[2]{\left \| #1 \right \|_{#2}}
\numberwithin{equation}{section}
\begin{document}
\title{Computation and verification of contraction metrics for \\
exponentially stable equilibria}

\author[1]{Peter Giesl\thanks{email: \texttt{p.a.giesl@sussex.ac.uk}}}
\author[2]{Sigurdur Hafstein\thanks{email: \texttt{shafstein@hi.is}}}
\author[2]{Iman Mehrabinezhad\thanks{email: \texttt{imehrabinzhad@hi.is}}}
\affil[1]{{\small Department of Mathematics, University of Sussex, Falmer BN1 9QH, United Kingdom}}
\affil[2]{{\small Faculty of Physical Sciences, University of Iceland, Dunhagi 5, IS-107 Reykjavik, Iceland}}

%

\date{\today}

\maketitle

\begin{abstract}
The determination of exponentially stable equilibria and their basin of attraction for a dynamical system given by a general autonomous ordinary differential equation can be achieved by means of a contraction metric. A contraction metric is a Riemannian metric with respect to which the distance between adjacent solutions decreases as time increases. The Riemannian metric can be expressed by a matrix-valued function on the phase space.

The determination of a contraction metric can be achieved by approximately solving a matrix-valued partial differential equation by mesh-free collocation using Radial Basis Functions (RBF). However, so far no rigorous verification that the computed metric is indeed a contraction metric has been provided.

In this paper, we combine the RBF method to compute a contraction metric with the CPA method to rigorously verify it. In particular, the computed contraction metric is interpolated by a continuous piecewise affine (CPA) metric at the vertices of a fixed triangulation, and by checking finitely many inequalities, we can verify that the interpolation is a contraction metric. Moreover, we show that, using sufficiently dense collocation points and a sufficiently fine triangulation, we always succeed with the construction and verification. We apply the method to two examples.
\end{abstract}

\vspace*{1ex}

{\bf Keywords:} Contraction Metric, Lyapunov Stability, Basin of Attraction, Numerical Method, Radial Basis Functions, Reproducing Kernel Hilbert Spaces, Continuous Piecewise Affine Interpolation, Verification

{\bf AMS subject classifications 2010:} 65N15, 37B25, 65N35, 34D20

\section{Introduction}
Consider an ordinary differential equation (ODE) of the form%
\begin{equation}\label{eq_ode}
  \dot{x} = f(x),\quad x \in \R^n
\end{equation}
with a $C^s$-vector field $f:\R^n \rightarrow \R^n$ with $s\ge 3$ and further assumptions will be made later.
The solution $x(t)$ with initial value $x(0) = \xi $ is denoted by $S_t \xi:=x(t)$ and is assumed to exist for all $t \geq 0$.
An equilibrium of the ODE is a point $ x_0 \in \R^n $ such that $ f(x_0) = 0 $, from which $x(t)=S_tx_0=x_0$ for all $t\in\R$ follows.
The equilibrium is said to be 
\textit{exponentially stable} if 
there exist $ \alpha,\beta,\delta >0 $ such that
$ \|x(0) - x_{0}\|_2< \delta $ implies
$$ \| x(t)-x_0\|_2 \leq \alpha  \| x(0)-x_0\|_2 e^{-\beta t}\ \ \ \text{for all $ t \geq 0$}. $$
We denote by $ \displaystyle \cA(x_0) = \{ x \in \R^n \,:\,\lim_{t \to \infty} S_t x = x_0 \} $ its \textit{basin of attraction}. \\
For a given domain in $\R^n$, we are interested in proving the existence, uniqueness and exponential stability of an equilibrium, as well as to determine or estimate  its basin of attraction.
There are different methods in the literature towards this problem.  If the position of an equilibrium is known, its exponential stability and a lower bound on its basin of attraction can be obtained
by computing a Lyapunov function for the system \cite{hahn,Khalil2002,lyapunovbook1907endtrans,vidya2002}.  Computing a Lyapunov function analytically is usually not feasible for a nonlinear system, therefore a plethora of numerical methods has been developed.  To name a few, a sum of squared (SOS) polynomials Lyapunov function can be parameterized by using semidefinite programming \cite{AnPaXXX,chesibook,Papachristodoulou2013,sosphdthesis2000parrilo} or with different methods \cite{KaPeXXXXalttosos,RaSh2010branchrelaxlya,Vannelli1985}, an approximate solution to the Zubov equation \cite{zubovbook1964} can be obtained using series expansion \cite{Vannelli1985, zubovbook1964} or by using radial basis functions (RBF) \cite{rbf2007giesl}, or linear programming can be used to parameterize a
continuous and piecewise affine (CPA) Lyapunov function \cite{Giesl2014,Haf2004exex,Johansen2000collo,JGD1999cpajulian,Mar2002cpa} or to verify a Lyapunov function candidate computed by other methods \cite{BGHKLXXXXmultiattr,GiHa2015combi,HaVa2019intLya}.

Another approach is based on contraction metrics, which has the advantage that the position of the asymptotically stable equilibrium, or more generally the attractor, is not needed.  Whereas a Lyapunov function demonstrates that
solutions get closer to the attractor as the time evolves if measured by an appropriate metric on $\R^n$, a contraction metric proves that adjacent solutions come closer  to each other as the time evolves when measured by an appropriate Riemannian metric \cite{Aghannan2003,hahn,krasovskii,lohmiller}.

The analytical computation of a contraction metric is even more difficult than that of a Lyapunov function.  Much less literature is available on numerical methods to compute contraction metrics, but the methods are
often similar to the ones used to compute Lyapunov functions, e.g.~in \cite{Aylward2008} SOS polynomials are parameterized, in \cite{giesl2018kernel,giesl-cont-1} RBF is used, and in \cite{GiHa2013CPAmetric} a CPA contraction
metric is parameterized using semidefinite programming. While the RBF method is computationally more efficient than the CPA method, the RBF method does not guarantee a\,priori  that the computed metric is indeed a contraction metric. However, error estimates show that this holds true if the set of collocation points is sufficiently dense. The CPA method, on the other hand, provides such a guarantee. The goal of this paper is to combine these two methods and their advantages.

In \cite{GiHa2015combi}  the RBF method is used to compute a Lyapunov function candidate with a subsequent verification of the linear constraints of a feasibility problem
for a CPA Lyapunov function. In this paper we follow a similar approach, but for contraction metrics and not for Lyapunov functions.  The result is a method that combines the computational efficiency of the RBF method and the rigour of the CPA method, but for contraction metrics the computational advantage is even larger than for Lyapunov functions,
 because the feasibility problem is not a linear programming problem but a semi-definite optimization problem.

In more detail: We first compute an approximation to a contraction metric using collocation with RBFs and then compute a CPA interpolation of this approximation. Using a sufficiently  dense set of collocation points and a sufficiently  fine triangulation, we are able to prove the conditions for a contraction metric for the CPA interpolation.

The paper is organized as follows: in Section \ref{preliminaries} some basic definitions and results about Riemannian contraction metrics are provided, in particular Theorem \ref{th:1} that describes their implications and  Theorem \ref{un} that shows the existence of a contraction metric as the solution to a matrix-valued PDE.\\

In Section \ref{RBF-section}, we  introduce the optimal recovery problem and review  previous results on finding the unique solution to this problem and the error estimates in recovering the contraction metric. \\
In Section \ref{CPA-section}, we review the concepts needed for the CPA interpolation of a function and then combine all the previous results to obtain Theorem \ref{le1},  which shows that if the constraints of Verification Problem \ref{VP} are satisfied for certain values, then these define a Riemannian contraction metric, and Theorem \ref{RBF-CPA contraction metric}, showing that such values can be obtained from a solution to the optimal recovery problem, if both the triangulation and the collocation points are sufficiently fine.
 \\
In Section \ref{examples-section}, we first present our  algorithm and then provide two examples to demonstrate its applicability. These examples have been used in different references and thus provide a good comparison.

\section{Preliminaries} \label{preliminaries}
In this section we will review basic concepts about Riemannian contraction metrics and some important tools that we will use later on in this paper. As the Riemannian metric which we later calculate will not be differentiable, we give a definition which does not require that.

\begin{definition} [Riemannian metric] \label{metric}  \hspace*{3cm} \\
Let $ G $ be an open subset of $\R^n$. A Riemannian metric is a locally Lipschitz continuous matrix-valued function
$ M:G \to \Sb^{n\times n}$, where $ \Sb^{n\times n} $ denotes the symmetric $ n \times n $ matrices with real entries, such that $ M(x) $ is positive definite for all $ x \in G $. \\
Then, $ \langle v, w\rangle_{M(x)} := v^T M(x) w $ defines a (point-dependent) scalar product for each $ x \in G $ and $ v,w \in \R^n $.

The forward orbital derivative $M'_{+} (x) $ with respect to \eqref{eq_ode} at $ x \in G $ is defined by
\begin{equation}\label{fod}
M'_{+}(x) := \limsup_{h \to 0^+} \dfrac{M \big(S_h x \big) - M(x)}{h}.
\end{equation}
\end{definition}

\begin{remark} \label{OD/FOD}
Note that the forward orbital derivative \eqref{fod} is formulated using a Dini derivative similar to \cite[Definition 3.1]{GiHa2013CPAmetric} and always exists in $ \R \cup \{ \infty \}$. This assumption is less restrictive than \cite[Definition 2.1]{contr2015giesl}, which is the existence and continuity of
$$ M'(x) = \left.\dfrac{d}{dt} M (S_t x ) \right|_{t = 0} . $$
A sufficient condition for the existence and continuity of $M'(x)$  is that
$ M\in C^1(G;\Sb^{n\times n}) $; then
$(M'_{+}(x))_{ij} = (M'(x))_{ij} = \left(\nabla M_{ij}(x)\cdot f(x)\right)_{ij}$ for all $ i, j \in \{ 1, 2, . . . , n \} $. \\
It is also worth mentioning that if $ K \subset G$ is compact, then $M$ in Definition \ref{metric} is uniformly positive definite on $K$, i.e.~there exists an $ \epsilon > 0 $ such that $ v^T M(x) v \geq \epsilon \|v\|^2 $ for all $ v \in \R^n $ and
all $ x \in K $.
\end{remark}

\begin{remark}\label{for-orb-der-formula}
It is useful to have a more accessible expression for the forward orbital derivative in terms of  $ f $ in \eqref{eq_ode}. In fact
we have
\begin{eqnarray*}
 M_{+}'(x) := \limsup_{h \to 0^+} \dfrac{M\big(S_h x \big) - M(x)}{h}
= \limsup_{h \to 0^+} \dfrac{M \big(x + h f(x) \big) - M(x)}{h},
\end{eqnarray*}
because by  \cite[Lemma 3.3]{GiHa2013CPAmetric} an analogous formula holds true for each entry $M_{ij}$ of the matrix $M$.
\end{remark}

\begin{definition} [Riemannian contraction metric] \label{contraction metric} \cite{contr2015giesl}\\
Let $K $ be a compact subset of an open set $ G \subset \R^n  $ and
$ M\in C^0(G;\Sb^{n\times n})$ be a Riemannian metric. For $x\in K, v \in \R^n$ define
$$ L_M(x; v) := \dfrac{1}{2} v^T \big [ M(x) Df(x) + Df (x)^T M(x) + M'_{+}(x) \big ] v. $$
The Riemannian metric is called contracting in $ K \subset G $ with exponent $ - \nu < 0 $, or a contraction metric on $K$, if
\begin{eqnarray}
\mathcal{L}_M(x) &\leq& - \nu \mbox{~for~all~} x \in K, \mbox{~where~} \label{contraction criteria}\\
\mathcal{L}_M(x) &:=& \max_{v^T M(x) v=1} L_M (x; v). \nonumber
\end{eqnarray}
\end{definition}

\begin{remark}
Fix $ x \in K$. Note that \eqref{contraction criteria} is equivalent to
\begin{equation*}
M(x) Df(x) + Df(x)^T M(x) + M'_{+}(x)  \preceq -2 \nu M(x)
\end{equation*}
where $ A \preceq B $ for $ A, B \in \Sb^{n\times n} $ means $ A - B $ is negative semi-definite, i.e.
$ w^T (A - B) w \leq 0 $ for all $ w \in \R^n$, see \cite[Remark 2.5]{contr2015giesl}.
\end{remark}

The next theorem will answer the question how contraction metrics can be used in our study of finding equilibria and their basin of attraction.

\begin{theorem}[Existence and uniqueness of the equilibrium]\label{th:1} \hspace*{2cm}\\
Let $\emptyset\not= K\subset \R^n$ be a compact, connected and
positively invariant set and $M$ be a Riemannian metric defined on a neighborhood $G$ of $K$ and contracting in $K$ with exponent $ - \nu <0$ as in Definition \ref{contraction metric}.
Then there exists one and only one equilibrium $x_0$ of system \eqref{eq_ode} in $K$;
$x_0$ is exponentially stable and $K$ is a subset of its basin of
attraction $\cA(x_0)$.
\end{theorem}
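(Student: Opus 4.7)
The plan is to apply Banach's fixed point theorem in the complete metric space $(K,d_M)$, where $d_M$ is the Riemannian distance induced by $M$. Define the Riemannian length of a piecewise $C^1$ curve $\gamma:[0,1]\to G$ by $L(\gamma)=\int_0^1\sqrt{\gamma'(s)^T M(\gamma(s))\gamma'(s)}\,ds$ and set $d_M(p,q)$ to be the infimum of $L(\gamma)$ over curves in a compact neighborhood of $K$ in $G$ joining $p$ to $q$. Using the uniform positive-definiteness of $M$ on compact subsets of $G$ (Remark \ref{OD/FOD}) and a finite covering of the compact connected set $K$ by balls in $G$, $d_M$ is finite on $K\times K$ and Lipschitz-equivalent to the Euclidean metric; in particular $(K,d_M)$ is a complete metric space.

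The key analytic step shows that the flow contracts the Riemannian distance between points of $K$ by the factor $e^{-\nu t}$. For $x,y\in K$ joined by a smooth curve $\gamma_0\subset K$, positive invariance yields $\gamma_t(s):=S_t\gamma_0(s)\in K$ for all $t\geq 0$, and the tangent $v(s,t):=\partial_s\gamma_t(s)$ satisfies the variational equation $\partial_t v=Df(\gamma_t(s))v$. For $W(s,t):=v(s,t)^T M(\gamma_t(s))v(s,t)$, expanding $v(s,t+h)$ and $\gamma_{t+h}(s)$ to first order in $h$ and invoking Remark \ref{for-orb-der-formula} to compute $\limsup_{h\to 0^+}h^{-1}[M(\gamma_{t+h}(s))-M(\gamma_t(s))]$ gives the upper Dini estimate
$$\partial_t^+ W(s,t) \;=\; v^T\!\left[MDf+Df^T M+M'_+\right]\!(\gamma_t(s))\,v \;\leq\; -2\nu\,W(s,t),$$
the last inequality being the contraction condition. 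A Dini version of Gronwall's inequality then yields $W(s,t)\leq e^{-2\nu t}W(s,0)$, integrating over $s$ gives $L(\gamma_t)\leq e^{-\nu t}L(\gamma_0)$, and taking the infimum over $\gamma_0$ produces $d_M(S_t x,S_t y)\leq e^{-\nu t}d_M(x,y)$ for all $x,y\in K$ and $t\geq 0$.

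The conclusion then follows almost formally. The time-one map $S_1:K\to K$ is a strict $d_M$-contraction on the complete space $(K,d_M)$, so Banach's theorem supplies a unique fixed point $x_0\in K$. Since the flow commutes with itself, $S_t x_0$ is also a fixed point of $S_1$ for every $t\geq 0$, so uniqueness forces $S_t x_0\equiv x_0$ and hence $f(x_0)=0$; any other equilibrium in $K$ would likewise be a fixed point of $S_1$, giving uniqueness of the equilibrium. Setting $y=x_0$ in the contraction estimate and translating $d_M$ back to the Euclidean norm via the Lipschitz equivalence gives $\|S_t x-x_0\|_2\leq\alpha\,e^{-\nu t}\|x-x_0\|_2$ for some $\alpha>0$ depending only on the uniform spectral bounds of $M$ on $K$, which is exactly exponential stability of $x_0$ together with $K\subseteq\cA(x_0)$.

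The main technical obstacle is the merely locally Lipschitz regularity of $M$: the function $W(\cdot,t)$ is not classically differentiable in $t$, so the chain of identities used to pass from the infinitesimal contraction inequality to the finite one must be formulated throughout in terms of upper Dini derivatives, and the classical Gronwall lemma has to be replaced by its Dini analogue (in the spirit of the entrywise argument recalled in Remark \ref{for-orb-der-formula}). A secondary, but less delicate, point is verifying that $d_M(p,q)$ is actually finite for every $p,q\in K$, which is where connectedness of $K$ and openness of $G$ enter via the covering argument mentioned above.
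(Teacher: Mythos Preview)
Your approach via Banach's fixed point theorem for the flow map $S_1$ acting on $(K,d_M)$ is genuinely different from the paper's route. The paper does not prove the result directly but defers to \cite[Theorem~3.1]{contr2015giesl}, whose argument is based on LaSalle's invariance principle: one takes $V(x)=f(x)^T M(x)f(x)$ as a Lyapunov-type function, shows $V'_+\le -2\nu V$ from the contraction condition with $v=f(x)$, concludes that every trajectory in $K$ converges to the set of equilibria, and then obtains uniqueness and exponential stability via a connectedness argument together with local stability from the linearisation. The paper's only contribution here is the observation that replacing $M'$ by $M'_+$ and using Dini derivatives does not disturb that proof. Your argument is more geometric and in principle gives the exponential rate and the basin inclusion in one stroke, without a separate local analysis.

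However, there is a genuine gap in your contraction step. You define $d_M(p,q)$ as the infimum of $L(\gamma)$ over curves lying in a compact neighbourhood of $K$ in $G$, but your length-decay estimate $L(\gamma_t)\le e^{-\nu t}L(\gamma_0)$ is proved only for curves $\gamma_0\subset K$, because you need positive invariance to keep $\gamma_t$ inside the region where $\mathcal L_M\le -\nu$. When you then ``take the infimum over $\gamma_0$'' you obtain only
\[
d_M(S_tx,S_ty)\;\le\;e^{-\nu t}\,\inf_{\gamma_0\subset K}L(\gamma_0)\;=\;e^{-\nu t}\,d_M^K(x,y),
\]
which is \emph{not} $e^{-\nu t}d_M(x,y)$ unless near-minimising curves for $d_M$ can be taken inside $K$. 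Nothing in the hypotheses guarantees this: $K$ is only assumed compact, connected and positively invariant, so it need not be path-connected, let alone geodesically convex, and curves realising $d_M$ may well have to pass through the ambient neighbourhood where neither the contraction inequality nor positive invariance is available. Conversely, if you switch to the intrinsic distance $d_M^K$, you get the contraction you want, but you lose the Lipschitz equivalence with the Euclidean metric and even finiteness of $d_M^K$, so Banach's theorem no longer applies as stated. This mismatch between the two infima is precisely what your covering argument for finiteness of $d_M$ does \emph{not} address. The LaSalle route avoids the issue because it works pointwise with the scalar function $V$ and never needs to connect two points of $K$ by a curve.
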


\begin{proof}
The proof is a mimic of \cite[Theorem 3.1]{contr2015giesl}, the only exception is that our Riemannian metric is not necessarily $C^1$ and we only assume the existence of the forward orbital derivative. This, however, does not change the structure of the proof. Thus, one can easily get the desired result by changing $M'$ to $M'_{+}$ there.  Note that LaSalle's principle, needed in the proof,
holds equally true for a $C^0$ mapping with forward orbital derivative which still fulfills the purpose of the theorem; see for example \cite[Theorem 2.20]{rbf2007giesl} and its proof and consider that a negative Dini derivative
implies that a function is decreasing.
\end{proof}

\begin{remark}
Let us define a linear differential operator $F$ associated with system \eqref{eq_ode}, given for any $ M\in C^{\tau}(G;\Sb^{n\times n})$ by
\begin{equation*}\label{F-def}
F(M)(x):= Df (x)^T  M(x)+M(x)Df(x)+M'_{+}(x).
\end{equation*}
As already mentioned in Remark \ref{OD/FOD}, when $\tau \geq 1$, the orbital derivative $M'$ exists and is equal to the positive orbital derivative $M'_{+}$. Therefore, in reading the following results one should not get confused using different references. However, we will prefer this notation as we need it later in the paper for functions with $\tau=0$.
\end{remark}

The following theorem which is a converse statement to Theorem \ref{th:1}, guarantees that within the basin of attraction for an  exponentially stable equilibrium of \eqref{eq_ode} there exists a contraction metric. Note that it provides a stronger smoothness property for $M$ than in Definition \ref{metric} and thus, it allows us to use the orbital derivative instead of the  forward orbital derivative (see Remark \ref{OD/FOD}). Note that on a compact subset $K \subset \cA(x_0)$, $M$ is a contraction metric by \eqref{contraction criteria}.

{\small
\begin{theorem}[{\small Existence and uniqueness of the contraction metric}] \cite[Theorems 2.2, 2.3]{giesl-cont-1}\label{un}
{\normalsize
Let $f\in C^s(\R^n;\R^n)$, $s\ge 2$. Let $x_0$ be an exponentially stable equilibrium of $\dot{x}=f(x)$ with basin of attraction $\cA(x_0)$.  Let $C\in C^{s-1}(\cA(x_0);\Sb^{n\times n})$ such that $C(x)$ is a positive definite matrix for all $x\in \cA(x_0)$. Then the matrix equation
\begin{equation} \label{matrixeq2}
F(M)(x) =  Df (x)^T  M(x)+M(x)Df(x)+M'_{+}(x)=-C(x)\ \ \ \ \ \text{ for all }x\in \cA(x_0)
\end{equation}
has a unique solution.

 In particular, $M\in C^{s-1}(\cA(x_0); \Sb^{n\times n})$, $M(x)$ is positive definite for all $x\in \cA(x_0)$ and
$M$ is of the form
$$ M(x)=\int_0^\infty \phi(\tau,0;x)^T
C(S_\tau x) \phi(\tau,0;x)\,d\tau, $$
 where $\tau \mapsto \phi(\tau,0;x)$ is the principal fundamental matrix solution to $\dot{y}=Df(S_t x)y$. 
 }
\end{theorem}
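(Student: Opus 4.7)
The statement is cited as \cite[Theorems 2.2, 2.3]{giesl-cont-1}, so my sketch mirrors the standard Lyapunov/contraction argument and I would only need to verify it goes through under the stated hypotheses.

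The plan is to verify in turn that the integral formula makes sense, defines a positive definite $C^{s-1}$ matrix function, solves the PDE, and is unique. First I would show convergence of the integral $M(x)=\int_0^\infty \phi(\tau,0;x)^T C(S_\tau x)\phi(\tau,0;x)\,d\tau$ for every $x\in\cA(x_0)$. Exponential stability gives constants $\alpha,\beta,\delta>0$ so that trajectories starting $\delta$-close to $x_0$ decay at rate $e^{-\beta t}$. For a general $x\in\cA(x_0)$, the trajectory $S_t x$ enters the ball $B_\delta(x_0)$ after some finite time $T(x)$, and on $[T(x),\infty)$ standard linearization of $f$ near $x_0$ combined with Gronwall yields an exponential bound $\|\phi(\tau,0;x)\|\le K(x)e^{-\gamma \tau}$ for some $\gamma>0$ (one can use any $\gamma<\beta$ and exploit that $Df(x_0)$ is Hurwitz). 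Since $C$ is continuous and $S_\tau x$ stays in a compact neighborhood of $x_0$ for large $\tau$, the integrand decays like $e^{-2\gamma\tau}$ and the integral converges, with locally uniform bounds in $x$.

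Positive definiteness of $M(x)$ is immediate: for any $v\ne 0$, $v^T M(x) v=\int_0^\infty \|\sqrt{C(S_\tau x)}\phi(\tau,0;x)v\|^2\,d\tau>0$ since $C$ is positive definite and $\phi(\tau,0;x)$ is invertible. Symmetry is obvious. Smoothness $M\in C^{s-1}$ follows from the classical fact that $(\tau,x)\mapsto \phi(\tau,0;x)$ inherits $s-1$ derivatives in $x$ from the $C^s$ regularity of $f$, combined with the locally uniform exponential bounds that allow differentiation under the integral sign.

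To check the PDE, I would compute the orbital derivative by using the flow property $\phi(\tau,0;S_h x)=\phi(\tau+h,0;x)\phi(0,h;x)=\phi(\tau+h,0;x)\phi(h,0;x)^{-1}$. Substituting into $M(S_h x)$ and changing variables $\tau\mapsto \tau-h$ gives
\begin{equation*}
M(S_h x)=\phi(h,0;x)^{-T}\left(\int_h^\infty \phi(\sigma,0;x)^T C(S_\sigma x)\phi(\sigma,0;x)\,d\sigma\right)\phi(h,0;x)^{-1}.
\end{equation*}
Differentiating at $h=0$, using $\frac{d}{dh}\phi(h,0;x)\big|_{h=0}=Df(x)$ and the fundamental theorem of calculus on the integral (which contributes $-C(x)$), yields exactly $M'(x)=-Df(x)^T M(x)-M(x)Df(x)-C(x)$, i.e.\ $F(M)(x)=-C(x)$. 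For uniqueness, suppose $M_1,M_2$ both satisfy the equation and set $N=M_1-M_2$, so $F(N)\equiv 0$. Then $\frac{d}{dt}\!\left[\phi(t,0;x)^T N(S_t x)\phi(t,0;x)\right]=\phi(t,0;x)^T F(N)(S_t x)\phi(t,0;x)=0$, so this expression equals $N(x)$ for all $t\ge 0$. Letting $t\to\infty$ and using $\phi(t,0;x)\to 0$ exponentially together with continuity of $N$ at $x_0$ forces $N(x)=0$.

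The main obstacle I anticipate is establishing the exponential bound $\|\phi(\tau,0;x)\|\le K(x)e^{-\gamma\tau}$ for \emph{arbitrary} $x\in\cA(x_0)$, not merely in a neighborhood of $x_0$: one must concatenate a (possibly large but finite) estimate on $[0,T(x)]$ with the linearized decay estimate on $[T(x),\infty)$, and ensure these bounds are locally uniform in $x$ in order to justify differentiation under the integral and to obtain $C^{s-1}$ regularity on all of $\cA(x_0)$.
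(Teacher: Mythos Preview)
The paper does not actually prove this theorem; it is recalled verbatim from \cite[Theorems 2.2, 2.3]{giesl-cont-1} and stated without proof, so there is no in-paper argument to compare against. Your sketch is the standard Lyapunov-type argument and is essentially what one finds in the cited reference: the flow identity $\phi(\tau,0;S_h x)=\phi(\tau+h,0;x)\phi(h,0;x)^{-1}$, the change of variables, and the differentiation at $h=0$ are all correct, as is the uniqueness argument via $\frac{d}{dt}\big[\phi^T N(S_t x)\phi\big]=\phi^T F(N)(S_t x)\phi=0$. The one point that deserves care, and which you correctly flag, is the exponential decay of $\phi(\tau,0;x)$ for \emph{all} $x\in\cA(x_0)$ with locally uniform constants; this is exactly where the cited reference does the work, splitting $[0,\infty)$ at the entry time into a small ball around $x_0$ and using that $Df(x_0)$ is Hurwitz on the tail. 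With that in hand, differentiation under the integral and the $C^{s-1}$ regularity go through as you describe.
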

}

We will now recall some norm-related definitions and inequalities that will be used throughout the paper.
For an $A\in\R^{n\times n}$ define
\begin{eqnarray*}
\|A\|_{\max}&:=&\max_{i,j=1, 2, \ldots , n}|a_{ij}|\quad \\
\quad \|A\|_p&:=&\max_{\|x\|_p=1}\|A x\|_p\  \ \text{for $p=1,2,\infty$,} \\
\norm{A}{F} &:=& \left( \sum_{i,j=1}^{n}    a_{ij} ^2 \right)^{\frac12}.
\end{eqnarray*}
The following relations are well known:
\begin{eqnarray} 
\|A\|_1&=&\max_{j=1,\ldots,n}\sum_{i=1}^n |A_{ij}|, ~ \|A\|_1  = \|A^T\|_\infty ,  \label{norm-equivalence1} \nonumber \\
\|A\|_{\max}&\le& \|A\|_2 \le n \, \|A\|_{\max},\ ~  \|A\|_2\le \sqrt{n}\|A\|_\infty ,   \label{norm-equivalence2}\\
\dfrac{1}{\sqrt{n}} \, \|A\|_1 &\le& \|A\|_2\le \sqrt{n} \, \|A\|_1, \label{norm-equivalence3} \nonumber\\
\norm{A}{2} \leq \norm{A}{F} &\leq&\sqrt{n} \norm{A}{2}. \label{norm-equivalence4} \nonumber
\end{eqnarray}
For a symmetric and positive definite $A$, the largest singular value $\lambda_{\max}$ of $A$, which equals $\|A\|_2$ and is the largest of its eigenvalues, is the smallest number such that
$ A \preceq \lambda_{\max} I $. \\
We recall that $ \|M\|_{L^\infty (K)}= \displaystyle \operatorname {ess} \sup_{x\in K}\|M(x)\|_2$ for any $K\subset \R^n$.  Further, if $M$ is continuous and a set $K\subset \R^n$ has the property, that every neighbourhood (in $K$) of every $x\in K$ has a strictly positive measure, then the essential supremum is identical to the supremum.

For a function $ W \in  C^k(\cD; \cR) $, where $\cD\subset \R^n$ is a non-empty open set, and $ \cR $ is  $ \R, \R^n, \Sb^{n\times n}, $ or $\R^{n \times n}$, we define the $C^k$-norm  as
\begin{equation}
\norm{W}{C^k(\cD;\cR) } := \sum_{| {\balpha} | \leq k} \, \sup_{x \in \cD} \norm{D^{\balpha} W(x)}{2}
\end{equation}
where $ \balpha\in\N_0^n $ is a multi-index and $ |\balpha| := \sum_{i=1}^{n} \alpha_i$.  Note that when all relevant $D^{\balpha} W$ can be continuously extended to $\overline{\cD}$, the $C^k$-norm is defined on $\overline{\cD}$ in a similar fashion.

\begin{remark}\label{function-norm-inequalities}
Throughout the paper we use the following inequalities regarding the function norms.  Assume that $ g \in C^2(\cD; \R)$, where $\emptyset \neq \cD\subset \R^n $ is open, and let $ K$ be a compact subset of $\cD$. Denote the Hessian of $g$ at $ x \in \cD $ by $ H_g(x) := \left( \frac{\partial^2 g}{\partial x_i \partial x_j}(x) \right)_{ij} $ and the upper bound
\begin{equation*}
 B_{K} := \max_{x \in K \atop i,j=1,\ldots,n }\left|\frac{\partial^2 g}{\partial x_i\partial x_j}(x)\right|
\end{equation*}
on all the second-order derivatives of $g$ on $K$ by $B_{K}$.

The first inequality bounds $B_{K}$ by the $C^2$-norm of $g$.
\begin{eqnarray}\label{B-C2-norm}
 \norm{g}{C^2(\cD; \R)} & := &  \sum_{| {\balpha} | \leq 2} \, \sup_{x \in \cD} \norm{D^{\balpha} g(x)}{2} \geq  \sum_{| {\balpha} | = 2} \, \sup_{x \in \cD} \norm{D^{\balpha} g(x)}{2} \nonumber \\
& \geq &  \sum_{i,j = 1}^{n} \max_{x \in K}\left|\frac{\partial^2 g}{\partial x_i\partial x_j}(x)\right|
 \geq \max_{i,j=1,\ldots,n } \max_{x \in K} \left|\frac{\partial^2 g}{\partial x_i\partial x_j}(x)\right| =  B_{K}.
\end{eqnarray}
The second inequality bounds the $2$-norm of the Hessian of $g$ by its $C^2$-norm.
 This is a sharper estimate than \cite[Lemma 4.2]{baier2012linear}.
\begin{eqnarray}\label{Hessian-C2-norm}
\sup_{x \in \cD}\,  \norm{H_g (x)}{2} & \leq & \sup_{x \in \cD} \, \norm{H_g(x)}{F} = \sup_{x \in \cD} \left( \sum_{i,j=1}^{n}   \left( \frac{\partial^2 g}{\partial x_i \partial x_j}(x)\right)^2 \right)^{\frac12}  \nonumber \\
& \leq & \sup_{x \in \cD} \,  \sum_{i,j=1}^{n}    \left| \frac{\partial^2 g}{\partial x_i \partial x_j}(x) \right|  \le  \sum_{| {\balpha} | = 2} \sup_{x \in \cD} \norm{D^{\balpha} g(x)}{2}
 \leq  \norm{g}{C^2(\cD; \R)}.
\end{eqnarray}
The third inequality bounds the  $2$-norm of the derivative $Dg(x) = \left( \dfrac{\partial g}{\partial x_i}(x) \right)_{i} $ by the  $C^1$-norm of $g$.
\begin{eqnarray}\label{D-C1-norm}
\sup_{x \in \cD} \norm{Dg(x)}{2} & = & \sup_{x \in \cD} \left( \sum_{i=1}^{n} \left( \dfrac{\partial g}{\partial x_i}(x) \right)^2 \right)^{1/2}  \leq  \sup_{x \in \cD} \sum_{i=1}^{n} \left| \dfrac{\partial g}{\partial x_i}(x) \right|  \nonumber \\
& \le & \sum_{| {\balpha} | = 1} \sup_{x \in \cD} \norm{D^{\balpha} g(x)}{2} \leq \norm{g}{C^1(\cD; \R)}.
\end{eqnarray}

Now, for a vector-valued function $g \in C^2(\cD; \R^n)$ one can extend the definition of $B_K$ using the formula
$$ B_{K} := \max_{x \in K \atop i, j, k = 1, \ldots, n } \left| \frac{\partial^2 g_k }{\partial x_i\partial x_j} (x) \right|$$
and prove inequalities analogous to \eqref{B-C2-norm} and \eqref{D-C1-norm} in a similar way,
\begin{eqnarray*}\label{B-C2-norm-Rn}
 \norm{g}{C^2(\cD; \R^n)}  \geq 
  \sum_{i,j = 1}^{n} \max_{x \in K} \left( \sum_{k=1}^{n} \left( \frac{\partial^2 g_k }{\partial x_i\partial x_j} (x) \right)^2 \right)^{\frac12} \geq B_K
\end{eqnarray*}
and, noting that $Dg(x)$ is a matrix,
\begin{eqnarray}\label{D-C1-norm-Rn}
\sup_{x \in \cD} \norm{Dg(x)}{2} & = & \sup_{x \in \cD} \norm{ \left( \frac{\partial g_k }{\partial x_i } (x)\right)_{ik} }{2} \leq \sup_{x \in \cD} \left( \sum_{i,k=1}^{n} \left(  \frac{\partial g_k }{\partial x_i } (x)  \right)^2 \right)^{\frac12}   \\
& \leq & \sup_{x \in \cD} \sum_{i=1}^{n} \left(  \sum_{k=1}^{n} \left(  \frac{\partial g_k }{\partial x_i } (x)  \right)^2 \right)^{\frac12} \le
\sum_{| {\balpha} | = 1} \, \sup_{x \in \cD} \norm{D^{\balpha} g(x)}{2}  \nonumber \\
& \le &  \norm{g}{C^1(\cD; \R^n)}.\nonumber
\end{eqnarray}
Further, for $g \in C^3(\cD; \R^n)$ and with
$$ B_{3,K} := \max_{x \in K \atop i, j, k, \ell = 1, \ldots, n } \left| \frac{\partial^3 g_\ell }{\partial x_i \partial x_j \partial x_k} (x) \right|, $$
we can analogously prove that
\begin{equation*}
B_{3,K} \leq \sum_{i,j,k=1}^n \max_{x\in K}  \left( \sum_{\ell=1}^{n} \left( \frac{\partial^3 g_\ell }{\partial x_i\partial x_j\partial_k} (x) \right)^2 \right)^{\frac12} \le \norm{g}{C^3(\cD; \R^n)}.
\end{equation*}

Finally, we show that the $C^2$-norm of each component $W_{ij} \in C^2(\cD; \R) $ of  a matrix-valued function $W \in C^2(\cD; \R^{n \times n}) $ is bounded by the $C^2$-norm
of $W$.  First, note that
\begin{eqnarray*}
\norm{W}{C^2(\cD; \R^{n \times n})} & := & \sum_{| {\balpha} | \leq 2} \, \sup_{x \in \cD} \norm{D^{\balpha} W(x)}{2} \\
 & = & \sup_{x \in \cD} \norm{W(x}{2} +  \sum_{k=1}^{n} \sup_{x \in \cD} \norm{\left( \frac{\partial W_{ij} }{\partial x_k} (x)\right)_{ij}}{2} +   \sum_{k, \ell = 1}^{n} \sup_{x \in \cD}\norm{\left( \frac{\partial^2 W_{ij} }{\partial x_k \partial x_\ell} (x)\right)_{ij}}{2}.
\end{eqnarray*}
Now, using the $2$-norm definition we obtain
\begin{small}
\begin{eqnarray*}
\norm{W(x)}{2}&=&\max_{u\in \R^n \atop \norm{u}{2} = 1} \norm{W(x) u}{2} =  \max_{u\in \R^n \atop \norm{u}{2} = 1} \left( \sum_{i=1}^{n} \left( \sum_{j=1}^{n} W_{ij}(x)u_j \right)^2 \right)^{\frac12} \\
& \geq & \max_{u\in \R^n \atop \norm{u}{2} = 1}\left| W_{i^*j^*}(x) u_{j^*}  \right| \ge   \norm{W_{i^*j^*}(x)}{2},
\end{eqnarray*}
\end{small}
where in the last line $i^*, j^* $ are arbitrary fixed indices in the range $1,2, \ldots, n $, and  similarly
$$
\norm{\left( \frac{\partial W_{ij} }{\partial x_k} (x)\right)_{ij}  }{2} \geq \ \norm{\frac{\partial W_{i^*j^*} }{\partial x_k}(x)}{2}\ \ \text{and}\ \
\norm{\left( \frac{\partial^2 W_{ij}}{\partial x_k \partial x_\ell}(x) \right)_{ij}}{2} \geq    \norm{ \frac{\partial^2 W_{i^*j^*} }{\partial x_k \partial x_\ell}(x)}{2}.
$$
Together this yields that for any $i, j = 1, 2, \ldots, n $ we have
\begin{equation}\label{matrix-to-component-norm}
\norm{W}{C^2(\cD; \R^{n \times n})} \geq \norm{W_{ij}}{C^2(\cD; \R)}.
\end{equation}
\end{remark}

The last statement of this section is a powerful result that we will use later to obtain the error estimates of approximated maps; note that \eqref{estim} follows directly from the proof in \cite[Theorem 2.4]{giesl-cont-1}.

\begin{theorem}[Perturbation effect on contraction metrics]\label{est}  \cite[Theorem 2.4]{giesl-cont-1}

Let $f\in C^s(\R^n;\R^n)$, $s\ge 2$. Let $x_0$ be an exponentially stable equilibrium of $\dot{x}=f(x)$ with basin of attraction $\cA(x_0)$.  Let $C_i\in C^{s-1}(\cA(x_0);\Sb^{n\times n})$, $i=1,2$, such that $C_i(x)$ is a positive definite matrix for all $x\in \cA(x_0)$.

Let $M_i\in C^{s-1}(\cA(x_0); \Sb^{n\times n})$
 be the unique solution (see Theorem \ref{un}) of the matrix equation
\begin{eqnarray*}\label{matrixeq3}
F(M_i)(x) := Df(x)^T M_i(x)+M_i(x)Df(x)+(M_{i})'_+(x)&=&-C_i(x)
\end{eqnarray*}
for all $ x \in \cA(x_0) $, where $i=1,2$. Let $K\subset \cA(x_0)$ be a compact set.

Then there is a constant $\alpha$, independent of $M_i$ and $C_i$ such that
\begin{equation*}
\sup_{x \in K} \norm{M_1(x)-M_2(x)}{2} \leq \, \alpha \sup_{x \in \overline{\gamma^+(K)}} \norm{C_1(x)-C_2(x)}{2},\label{estim}
\end{equation*}
where $\gamma^+(K)=\bigcup_{t\ge 0}S_tK$.
\end{theorem}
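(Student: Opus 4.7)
My plan is to exploit the explicit integral representation of the solution of the Lyapunov-type equation $F(M)=-C$ provided by Theorem~\ref{un}, together with the linearity of $F$ in $M$. Writing $\phi(\tau,0;x)$ for the principal fundamental matrix solution of the variational equation $\dot y=Df(S_tx)y$ along the trajectory of $x$, Theorem~\ref{un} gives
\begin{equation*}
  M_i(x)=\int_0^\infty \phi(\tau,0;x)^T C_i(S_\tau x)\,\phi(\tau,0;x)\,d\tau,\quad i=1,2,
\end{equation*}
for every $x\in\cA(x_0)$. Subtracting these two identities and bringing the difference inside the integral yields
\begin{equation*}
  M_1(x)-M_2(x)=\int_0^\infty \phi(\tau,0;x)^T\bigl[C_1(S_\tau x)-C_2(S_\tau x)\bigr]\phi(\tau,0;x)\,d\tau.
\end{equation*}

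Next I would take the $2$-norm, apply submultiplicativity (together with $\|\phi^T\|_2=\|\phi\|_2$), and use that $S_\tau x\in\overline{\gamma^+(K)}$ for every $x\in K$ and $\tau\ge 0$ to pull the supremum of $\|C_1-C_2\|_2$ out of the integral. This gives, for each $x\in K$,
\begin{equation*}
  \|M_1(x)-M_2(x)\|_2\;\le\;\Bigl(\int_0^\infty \|\phi(\tau,0;x)\|_2^{\,2}\,d\tau\Bigr)\cdot \sup_{y\in\overline{\gamma^+(K)}}\|C_1(y)-C_2(y)\|_2.
\end{equation*}
It remains to bound the bracketed integral uniformly in $x\in K$ by a constant $\alpha$ that depends only on $f$ and $K$.

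To produce such a bound, I would use the representation once more, this time applied to the \emph{specific} choice $C\equiv I$: Theorem~\ref{un} guarantees that $M_I(x):=\int_0^\infty \phi(\tau,0;x)^T\phi(\tau,0;x)\,d\tau$ is well-defined and even $C^{s-1}$ on $\cA(x_0)$. Using $\|\phi\|_2^{\,2}\le\|\phi\|_F^{\,2}=\operatorname{tr}(\phi^T\phi)$ and monotone integration one gets
\begin{equation*}
  \int_0^\infty\|\phi(\tau,0;x)\|_2^{\,2}\,d\tau\;\le\;\operatorname{tr}\bigl(M_I(x)\bigr)\;\le\;n\,\|M_I(x)\|_2.
\end{equation*}
Since $M_I$ is continuous on $\cA(x_0)\supset K$ and $K$ is compact, the right-hand side is bounded by a finite constant depending only on $f$ and $K$; setting $\alpha:=n\sup_{x\in K}\|M_I(x)\|_2$ completes the estimate. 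Crucially, $\alpha$ has no dependence on the particular $C_1,C_2,M_1,M_2$ under consideration.

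The only conceptual obstacle is the uniform integrability step: one must avoid needing an exponential decay rate for $\phi$ whose constants depend on $C_i$. Using the Frobenius/trace trick to relate $\int_0^\infty\|\phi\|_2^2 d\tau$ to a \emph{single} canonical solution $M_I$ of $F(M)=-I$ bypasses this issue cleanly, because the finiteness and smoothness of $M_I$ are already guaranteed by Theorem~\ref{un} itself.
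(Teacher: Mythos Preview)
Your argument is correct. The paper does not actually supply its own proof of this theorem; it simply cites \cite[Theorem~2.4]{giesl-cont-1} and remarks that the specific estimate with $\overline{\gamma^+(K)}$ follows directly from the proof there. Your route via the explicit integral representation from Theorem~\ref{un}, subtracting the two formulas, and pulling out $\sup_{y\in\overline{\gamma^+(K)}}\|C_1(y)-C_2(y)\|_2$ is precisely the natural argument and is what the cited proof does. The one place where you add a small refinement is in bounding $\int_0^\infty\|\phi(\tau,0;x)\|_2^2\,d\tau$: rather than invoking a separately-derived exponential decay estimate for $\phi$ uniform on $K$ (which is how such bounds are usually obtained in the literature), you observe that this integral is dominated by $\operatorname{tr} M_I(x)$ for the canonical solution $M_I$ of $F(M)=-I$, whose existence and continuity are already guaranteed by Theorem~\ref{un}. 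This is a clean shortcut that makes the independence of $\alpha$ from $C_i,M_i$ completely transparent without any extra analysis.
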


The theorem shows that if $\|F(M)(x) - F(S)(x)\| \leq \epsilon$ for all
$ x \in \overline{\gamma^+(K)} $,
then we have
$ \| M(x) - S(x) \| \leq \alpha \, \epsilon $ for all $ x \in K$. In particular, as $M$ is positive definite in $K$, so is $S$ for all small enough $\epsilon>0$. Note that for a positively invariant and compact set $K$ we have $\overline{\gamma^+(K)} = K $.

In the rest of the paper we will consider the PDE \eqref{matrixeq2} with a constant right hand side
\begin{eqnarray}\label{matrixeq4}
Df (x)^T M(x)+M(x)Df(x)+M'_{+}(x) = -C,
\end{eqnarray}
that is, $ F(M)(x) =-C $ for all $x\in \cA(x_0) $.

\section{First Approximation using RBF} \label{RBF-section}

In this section we introduce the proper setting for the optimal recovery problem and then review two theorems: one regarding the existence and uniqueness of the optimal recovery (Theorem \ref{th:S}) and an error estimate for the approximation (Theorem \ref{thmfinal}).

Let $\Omega\subset\R^n$ be a domain and $\sigma>n/2$ be given. Then, the
matrix-valued Sobolev space $H^\sigma(\Omega;\R^{n\times n})$ consists of
all matrix-valued functions $M$ having each component $M_{ij}:\Omega\to \R$ in the Sobolev space
$H^\sigma(\Omega)$.
Similarly, the
Sobolev space $H^\sigma(\Omega;\Sb^{n\times n})$ consists of
all symmetric matrix-valued functions $M$ having each component $M_{ij}$ in
$H^\sigma(\Omega)$.

$H^\sigma(\Omega;\R^{n\times n})$ and $H^\sigma(\Omega;\Sb^{n\times n})$ are Hilbert spaces with inner
product given by
\[
\langle M, S\rangle_{H^\sigma(\Omega;\R^{n\times n})}:=\sum_{i,j=1}^n \langle M_{ij},S_{ij}\rangle_{H^\sigma(\Omega)},
\]
where $\langle \cdot,\cdot\rangle_{H^\sigma(\Omega)}$ is the usual inner product on $H^\sigma(\Omega)$;
the same inner product can be used for $H^\sigma(\Omega;\Sb^{n\times n})$. They are also \textit{reproducing kernel Hilbert spaces} (RKHS). In the following we assume that $W$ is either $\R^{n\times n}$ or its subspace $\Sb^{n\times n}$.
On $W$ we define the  inner product
\begin{eqnarray*}
\langle \alpha,\beta \rangle_W &=&\sum_{i,j=1}^n \alpha_{ij}\beta_{ij},
\qquad \alpha=(\alpha_{ij}), \beta=(\beta_{ij}),
\label{W}
\end{eqnarray*}
which renders it a Hilbert space.  We denote by $\cL(W)$ the linear space of all linear and bounded
operators $W \to W$.

\begin{definition}[Reproducing Kernel Hilbert Space] \label{def:RKHS}
A Hilbert space $\cH(\Omega;W)$ of functions $ f: \Omega \to W $ is called  {\em
    reproducing kernel Hilbert space} if there is a function
  $\Phi:\Omega\times\Omega\to \cL(W)$  with the following properties{\rm\,:}
\begin{enumerate}[nosep]
\item $\Phi(\cdot,x)\alpha \in \cH(\Omega;W)$ for all $x\in\Omega$ and
  all $\alpha\in W$.
\item $\langle f(x),\alpha\rangle_W = \langle f,
  \Phi(\cdot,x)\alpha\rangle_{\cH}$ for all $f\in\cH(\Omega;W)$, all
  $x\in\Omega$ and all $\alpha\in W$.
\end{enumerate}
The function $\Phi$ is called a {\em reproducing kernel} of
$\cH(\Omega;W)$.
\end{definition}

A kernel $\Phi$ is thus a mapping $\Phi:\Omega\times \Omega\to
\cL(W)$, $W=\R^{n\times n}$ or $W=\Sb^{n\times n}\subset\R^{n\times n}$,  and can be represented by a tensor of order $4$, i.e.~we
will write
$
\Phi=(\Phi_{ijk\ell})
$
and define its action on $\alpha\in \R^{n\times n}$ by
\begin{eqnarray*}
(\Phi(x,y)\alpha)_{ij} &=& \sum_{k,\ell=1}^n
\Phi(x,y)_{ijk\ell}\alpha_{k\ell}.\label{action}
\end{eqnarray*}

\begin{lemma}[Induction of reproducing kernels] \label{kernel1} \cite[Lemma 3.2]{giesl2018kernel}

Let $\Omega\subset\R^n$ be a domain and $\sigma>n/2$ be
  given. Assume that $\phi:\Omega\times\Omega\to\R$ is the reproducing
  kernel of $H^\sigma(\Omega)$. Then, $H^\sigma(\Omega;\R^{n\times n})$ and $H^\sigma(\Omega;\Sb^{n\times n})$ are also reproducing kernel Hilbert spaces with the reproducing kernel
  $\Phi$ defined by
\begin{equation}\label{phi-tensor}
\Phi(x,y)_{ijk \ell}:=\phi(x,y) \delta_{ik} \delta_{j \ell}
\end{equation}
for $ x, y \in \Omega $ and $ 1 \le i, j, k, \ell \le n$.
\end{lemma}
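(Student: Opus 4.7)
The plan is to verify directly the two defining conditions of Definition \ref{def:RKHS} for the tensor $\Phi$ given by \eqref{phi-tensor}. The key observation is that the action of $\Phi$ on a matrix $\alpha \in W$ collapses to a pointwise scalar multiplication: for every $y, x \in \Omega$,
$$
(\Phi(y,x)\alpha)_{ij} = \sum_{k,\ell=1}^n \phi(y,x)\,\delta_{ik}\delta_{j\ell}\,\alpha_{k\ell} = \phi(y,x)\,\alpha_{ij}.
$$
Thus $\Phi(\cdot, x)\alpha$ is simply the matrix-valued function whose $(i,j)$-component equals $\alpha_{ij}\,\phi(\cdot, x)$.

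First I would check property (1). Because $\phi$ is the reproducing kernel of $H^\sigma(\Omega)$, we have $\phi(\cdot, x) \in H^\sigma(\Omega)$ for every $x \in \Omega$. Hence each component $\alpha_{ij}\,\phi(\cdot, x)$ belongs to $H^\sigma(\Omega)$, so $\Phi(\cdot, x)\alpha \in H^\sigma(\Omega;\R^{n\times n})$ by definition of the matrix-valued Sobolev space. For the symmetric case, if $\alpha \in \Sb^{n\times n}$ then $\alpha_{ij} = \alpha_{ji}$, so the components of $\Phi(\cdot, x)\alpha$ inherit this symmetry and the function lies in $H^\sigma(\Omega;\Sb^{n\times n})$.

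Next I would verify the reproducing identity (2). Fix $f \in \cH(\Omega; W)$, $x \in \Omega$ and $\alpha \in W$. Using the definition of the inner product on $H^\sigma(\Omega; W)$ together with the scalar reproducing identity $\langle g, \phi(\cdot, x)\rangle_{H^\sigma(\Omega)} = g(x)$ for $g \in H^\sigma(\Omega)$, one computes
$$
\langle f, \Phi(\cdot, x)\alpha\rangle_{H^\sigma(\Omega;W)} = \sum_{i,j=1}^n \langle f_{ij},\,\alpha_{ij}\,\phi(\cdot, x)\rangle_{H^\sigma(\Omega)} = \sum_{i,j=1}^n \alpha_{ij}\,f_{ij}(x) = \langle f(x), \alpha\rangle_W,
$$
which is exactly the required identity.

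Both conditions being established, $\Phi$ is a reproducing kernel for both $H^\sigma(\Omega;\R^{n\times n})$ and its closed subspace $H^\sigma(\Omega;\Sb^{n\times n})$. I do not anticipate a genuine obstacle here: the argument is essentially an unwinding of definitions once the tensor-product structure $\Phi(x,y)_{ijk\ell} = \phi(x,y)\delta_{ik}\delta_{j\ell}$ is recognized as decoupling the matrix indices from the underlying scalar kernel. The only mildly delicate point is the restriction to $\Sb^{n\times n}$, but this is handled by observing that the map $\alpha \mapsto \Phi(\cdot, x)\alpha$ preserves symmetry.
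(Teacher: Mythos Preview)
Your argument is correct and is precisely the natural verification of the two axioms in Definition \ref{def:RKHS}: the observation $(\Phi(\cdot,x)\alpha)_{ij}=\alpha_{ij}\phi(\cdot,x)$ immediately reduces both properties to the scalar reproducing-kernel hypothesis on $\phi$, and the symmetry of $\Phi(\cdot,x)\alpha$ when $\alpha\in\Sb^{n\times n}$ is handled correctly. Note that the paper itself does not supply a proof of this lemma but merely cites \cite[Lemma 3.2]{giesl2018kernel}; your write-up is essentially what one finds there, so there is nothing further to compare.
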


It is now time to introduce the problem: how to recover a function with values in $W$ given only finitely many information of it.

\begin{definition}[Optimal recovery of a function]\label{optrec}
Given $N$ linearly independent functionals
$\lambda_1,\ldots,\lambda_N\in \cH(\Omega;W)^*$ of a reproducing kernel Hilbert space $\cH(\Omega;W)$ and $N$ values
$r_1=\lambda_1(M),\ldots, r_N=\lambda_N(M)\in\R$ generated by an
element $M\in\cH(\Omega;W)$. The optimal recovery of $M$ based on this
information is defined to be the element $ S \in\cH(\Omega;W)$ which
solves
\[
\min \left\{\| s \|_\cH : s\in\cH(\Omega;W) \mbox{ with
}\lambda_j(s)=r_j, 1\le j\le N\right\}.
\]
\end{definition}

We choose Wendland functions as the radial basis functions, which will define the reproducing kernel $\Phi$ needed for our optimal recovery problem. For more details on these functions and their properties, see \cite{Wendland1998}.

\begin{definition}[Wendland functions, \cite{Wendland1998}]  \label{defwend}
Let $l\in\mathbb N$, $k\in \mathbb N_0$. We define by recursion
\begin{eqnarray*}
\psi_{l,0}(r)&=&(1-r)^l_+,
\label{rec0}\\
\mbox{and } \psi_{l,k+1}(r)&=&\int_{r}^1
t\psi_{l,k}(t)\,dt\label{reci}
\end{eqnarray*}
for $r\in \mathbb R_0^+$.
Here we set $x_+=x$ for $x\ge 0$, $x_+=0$ for
$x<0$, and $x_+^l:=(x_+)^l$.

With $ l:=\lfloor \frac{n}{2}\rfloor+k+1$ the function $\Phi(x):=\psi_{l,k}(c \|x\|_2)$  belongs to $C^{2k}(\mathbb R^n)$ for any $c>0$ and the reproducing kernel Hilbert space with reproducing kernel $\Phi$ given by a Wendland function is norm-equivalent to the Sobolev space $H^{\sigma}(\Omega)$, where $\sigma=k+\frac{n+1}{2}$.
\end{definition}

Now consider $H^\sigma(\Omega; \Sb^{n\times n})$, the matrix-valued Sobolev space with reproducing kernel $\Phi:\Omega\times\Omega\to\cL(\Sb^{n\times n})$
as in \eqref{phi-tensor}, $\phi(x,y)=\psi_{l,k}(c \|x-y\|_2)$, where $\psi_{l,k}$ is a Wendland function with $ l:=\lfloor \frac{n}{2}\rfloor+k+1$ and $c>0$.   We again have  $\sigma=k+\frac{n+1}{2}$.

We then define the linear functionals
$\lambda_k^{(i,j)}:H^\sigma(\Omega;\Sb^{n\times n})\to\R$ by
\begin{eqnarray}
\lambda_{k}^{(i,j)}(M) &=& e_i^T \left[Df^T(x_k)M(x_k)
  +M(x_k)Df(x_k)+M'_{+}(x_k) \right]e_j\label{functionals}\\
&=:& e_i^T F_k(M)e_j\nonumber\\
&=& e_i^T F(M)(x_k)e_j\nonumber
\end{eqnarray}
for $x_k\in\Omega$, $1\le k\le N$ and $1\le i\le j\le n$. Here, $e_i$
denotes the usual $i$th unit vector in $\R^n$.  Thus $\lambda_{k}^{(i,j)}(M)$ is simply the $(i,j)$th element of the matrix $F(M)(x_k)$.

We define $ E_{\mu \mu}^s $ to be the matrix with value $1$ at position $(\mu, \mu)$ and value zero everywhere else. For $ \mu < \nu $, we define $ E_{\mu \nu}^s $ to be the matrix with value
$ 1/ \sqrt{2} $ at positions $(\mu, \nu)$ and $(\nu, \mu)$ and value zero everywhere else. It is easy to see that
$ \{  E_{\mu \nu}^s : 1\leq \mu \leq \nu \leq n \}$  is an orthonormal basis of $\Sb^{n\times n}$.  We also define $ E_{\mu \nu} \in \R^{n \times n}$ to be the matrix with value 1 at position
$(\mu, \nu)$ and value zero everywhere else.

We can compute the solution $S$ of the optimal recovery problem
as in Definition \ref{optrec}. This gives the following result with our notation:

\begin{theorem}[{\small Existence and uniqueness of the optimal recovery}] \cite[Theorem 5.2]{giesl2018kernel}\label{th:S}

Let $\sigma>n/2+1$ and let $\Phi:\Omega\times\Omega\to\cL(\Sb^{n\times
  n})$ be the reproducing kernel of $H^\sigma(\Omega;\Sb^{n\times n})$.
Let $X=\{x_1,\ldots,x_N\}\subset\Omega$ be pairwise distinct points and let
$\lambda_k^{(i,j)}\in H^\sigma(\Omega;\Sb^{n\times n})^*$, $1\le k\le N$ and $1\le
i,j\le n$ be defined by (\ref{functionals}).
Then there is a unique  function $S\in H^\sigma(\Omega;\Sb^{n\times n})$ solving
\[
\min\left\{ \|S\|_{H^\sigma(\Omega; \Sb^{n\times n})} : \lambda_{k}^{(i,j)}(S) = -C_{ij}, 1\le
i\le j\le n, 1\le k\le N\right\},
\]
where $C=(C_{ij})_{i,j=1,\ldots,n}$ is a symmetric, positive definite matrix.
It has the form
\begin{eqnarray} \label{Sformula}
S(x) &=& \sum_{k=1}^N\sum_{1\le i\le j\le n} \gamma_k^{(i,j)}
\sum_{1\le \mu\le \nu\le n}\lambda_{k}^{(i,j)}(\Phi(\cdot,x)E^s_{\mu\nu})E^s_{\mu\nu} \nonumber\\
&=& \sum_{k=1}^N\sum_{1\le i\le j\le n}
\gamma_k^{(i,j)}
\bigg[
\sum_{ \mu=1}^n
F_k(\Phi(\cdot,x)_{\cdot,\cdot,\mu,\mu})_{ij}E_{\mu\mu}\nonumber\\
&&+
\frac{1}{2}
\sum_{\substack{ \mu,\nu=1\\\mu\not =\nu}}^n
[F_k(\Phi(\cdot,x)_{\cdot,\cdot,\mu,\nu})_{ij}+
F_k(\Phi(\cdot,x)_{\cdot,\cdot,\nu,\mu})_{ij}]E_{\mu\nu}\bigg],\label{form1}
\end{eqnarray}
where the coefficients $\gamma_k=(\gamma_k^{(i,j)})_{1\le i\le j\le n}$
are determined by substituting \eqref{Sformula} in the operator equations $\lambda_\ell^{(i,j)}(S)=-C_{ij}$ for $1\le i\le j\le n$, $1\le \ell\le N$.

If the kernel $\Phi$ is given by \eqref{phi-tensor} then we also have the alternative expression
\begin{eqnarray*}
S(x) &=&
\sum_{k=1}^N\sum_{i,j=1}^n\beta_k^{(i,j)} \sum_{\mu,\nu=1}^n
F_k(\Phi(\cdot,x)_{\cdot,\cdot,\mu,\nu})_{ij} E_{\mu\nu}\label{form2}
\end{eqnarray*}
where the symmetric matrices $\beta_k\in \Sb^{n\times n}$ are defined by  $\beta_k^{(j,i)}=\beta_k^{(i,j)}=\frac{1}{2}\gamma_k^{(i,j)}$ if $i\not= j$ and
$\beta_k^{(i,i)}=\gamma_k^{(i,i)}$.
\end{theorem}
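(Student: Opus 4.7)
The plan is to deploy the classical optimal-recovery framework in reproducing kernel Hilbert spaces, adapted to the matrix-valued setting. First I would verify that each $\lambda_k^{(i,j)}$ is a bounded linear functional on $H^\sigma(\Omega;\Sb^{n\times n})$. Since $\sigma > n/2+1$, the Sobolev embedding yields a continuous embedding of each component into $C^1(\overline{\Omega'})$ for compactly contained $\Omega'$, so the point-evaluation functionals for $M(x_k)$ and for its first derivatives (entering through $M'_+(x_k) = \sum_\ell f_\ell(x_k)\partial_\ell M(x_k)$) are continuous. As $\lambda_k^{(i,j)}(M) = e_i^T F(M)(x_k)e_j$ is a linear combination of such evaluations with $Df(x_k)$-dependent weights, it is bounded.

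Next, invoke the Riesz representation theorem to get representers $v_k^{(i,j)}\in H^\sigma(\Omega;\Sb^{n\times n})$ with $\lambda_k^{(i,j)}(M)=\langle M,v_k^{(i,j)}\rangle_{\cH}$. Set $V:=\spann\{v_k^{(i,j)}:1\le k\le N,\ 1\le i\le j\le n\}$. Any admissible $s$ decomposes orthogonally as $s = s_V + s^\perp$ with $s^\perp \perp V$; the constraints only see $s_V$ because $\langle s^\perp,v_k^{(i,j)}\rangle_\cH = 0$, and $\|s\|_\cH^2 = \|s_V\|_\cH^2 + \|s^\perp\|_\cH^2$, so the minimizer must satisfy $s^\perp=0$, i.e., $S\in V$. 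Writing $S = \sum_{k,i\le j}\gamma_k^{(i,j)}v_k^{(i,j)}$ and substituting into the constraint $\lambda_\ell^{(r,s)}(S)=-C_{rs}$ produces a linear system with matrix $\bigl(\langle v_k^{(i,j)},v_\ell^{(r,s)}\rangle_\cH\bigr)$, which is the Gram matrix of $\{v_k^{(i,j)}\}$. The hypothesized linear independence of the functionals transfers (via Riesz) to linear independence of the representers, so the Gram matrix is positive definite and the coefficients are uniquely determined; this gives both existence and uniqueness of $S$.

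Third, I would make $v_k^{(i,j)}$ explicit by expanding in the orthonormal basis $\{E^s_{\mu\nu}\}_{\mu\le\nu}$ of $\Sb^{n\times n}$ and invoking the reproducing property twice:
\[
v_k^{(i,j)}(x) \;=\; \sum_{\mu\le\nu}\langle v_k^{(i,j)}(x),E^s_{\mu\nu}\rangle_W E^s_{\mu\nu}
\;=\; \sum_{\mu\le\nu}\langle v_k^{(i,j)},\Phi(\cdot,x)E^s_{\mu\nu}\rangle_\cH E^s_{\mu\nu}
\;=\; \sum_{\mu\le\nu}\lambda_k^{(i,j)}\bigl(\Phi(\cdot,x)E^s_{\mu\nu}\bigr)E^s_{\mu\nu}.
\]
Substituting this into $S=\sum\gamma_k^{(i,j)}v_k^{(i,j)}$ yields the first displayed formula for $S(x)$.

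Finally, to obtain the alternative form, plug in the tensor representation $\Phi(x,y)_{ijk\ell}=\phi(x,y)\delta_{ik}\delta_{j\ell}$ from Lemma \ref{kernel1} and unfold $E^s_{\mu\nu}$: when $\mu=\nu$, $E^s_{\mu\mu}=E_{\mu\mu}$, and when $\mu<\nu$, $E^s_{\mu\nu}=(E_{\mu\nu}+E_{\nu\mu})/\sqrt 2$. Re-indexing the double sum over $\mu\le\nu$ as an unrestricted sum over $\mu,\nu\in\{1,\dots,n\}$, together with the symmetrization $\beta_k^{(i,j)}=\beta_k^{(j,i)}=\frac{1}{2}\gamma_k^{(i,j)}$ for $i\ne j$ and $\beta_k^{(i,i)}=\gamma_k^{(i,i)}$, absorbs the factors of $1/\sqrt 2$ and produces the stated compact expression. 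The main obstacle I anticipate is precisely this bookkeeping in the final step: tracking the $1/\sqrt 2$ and $1/2$ factors that arise from changing between the orthonormal basis $\{E^s_{\mu\nu}\}_{\mu\le\nu}$ and the non-normalized family $\{E_{\mu\nu}\}_{\mu,\nu}$, and verifying that the passage from $\gamma_k^{(i,j)}$ (indexed by $i\le j$) to the symmetric matrices $\beta_k$ (indexed over all $i,j$) is consistent everywhere it appears.
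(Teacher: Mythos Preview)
The paper does not actually prove this theorem; it is quoted verbatim from \cite[Theorem~5.2]{giesl2018kernel} with no argument supplied, so there is no ``paper's proof'' to compare against beyond the citation itself. Your proposal follows the standard optimal-recovery argument in RKHS and is essentially the right one; it is presumably also what the cited reference does.

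One point to tighten: you write ``the hypothesized linear independence of the functionals,'' but the theorem as stated does not hypothesize this --- it only assumes the collocation points $x_1,\dots,x_N$ are pairwise distinct. You therefore need an extra step showing that pairwise distinct points yield linearly independent functionals $\lambda_k^{(i,j)}$, $1\le k\le N$, $1\le i\le j\le n$. This is not automatic: it uses that the differential operator $F$ has no non-trivial kernel on the relevant function class (equivalently, that the Riesz representers $v_k^{(i,j)}$ are linearly independent), which in turn rests on properties of the kernel $\Phi$ and of $F$. Without this, the Gram matrix could be singular and both existence and uniqueness would fail. Everything else --- boundedness via Sobolev embedding, the orthogonal-projection argument forcing $S\in V$, the explicit representer formula via the reproducing property, and the $\gamma\to\beta$ bookkeeping --- is sound.
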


We will measure the error of the optimal recovery in terms of
the so-called fill distance or mesh norm
\[
h_{X,\Omega}:=\sup_{x\in\Omega}\min_{x_i \in X}\|x-x_i\|_2.
\]

\begin{theorem}[Error estimates for the RBF approximation] \cite[Theorem 5.3]{giesl2018kernel} \label{thmfinal}

Let $f\in C^{\sigma + 1}(\R^n;\R^n)$, with $ \sigma \in \N $ and $\sigma > n/2 + 1$. Assume that $x_0$ is an
exponentially stable equilibrium of $\dot x =f(x)$ with basin of
attraction $\cA(x_0)$. Let $C\in\Sb^{n\times n}$ be a positive definite
(constant) matrix and let $M\in C^{\sigma}(\cA(x_0);\Sb^{n\times n})$ be
the solution of the PDE (\ref{matrixeq4}) from Theorem \ref{un}. Let
$K\subset \Omega\subset \cA(x_0)$ be a positively invariant and
compact set, where $\Omega$ is open with Lipschitz boundary. Finally, let
$S$ be the optimal recovery of $M$ from Theorem \ref{th:S}. Then, we have the error
estimate
\begin{eqnarray}
\sup_{x \in K} \norm{M(x)-S(x)}{2} \le
  \alpha \|F(M)-F(S)\|_{L^\infty(\Omega;\Sb^{n\times n})} \le \beta
  h_{X,\Omega}^{\sigma-1-n/2}
\|M\|_{H^\sigma(\Omega;\Sb^{n\times n})}
\label{RBF-est}
\end{eqnarray}
for all sets $X\subset\Omega$ with sufficiently small $h_{X,\Omega}$.
\end{theorem}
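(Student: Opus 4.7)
My plan is to split the bound into two steps mirroring the two inequalities in the statement: the left one reduces to the perturbation bound of Theorem \ref{est}, and the right one is a standard sampling inequality for the underlying reproducing kernel Hilbert space $H^\sigma(\Omega;\Sb^{n\times n})$.

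For the left inequality, I would apply Theorem \ref{est} with $M_1:=M$, $C_1:=C$ and $M_2:=S$, $C_2(x):=-F(S)(x)$, so that the matrix PDE $F(M_i)=-C_i$ holds tautologically for $i=1,2$. The only hypothesis of Theorem \ref{est} that requires verification is positive definiteness of $C_2$: by the collocation identities we have $F(S)(x_k)=-C$ at every $x_k\in X$, and the right-hand inequality (established below) forces $F(S)$ to be uniformly close to $-C$ on $\Omega$ once $h_{X,\Omega}$ is small enough, so $-F(S)$ is positive definite there. Because $K$ is positively invariant we have $\overline{\gamma^+(K)}=K\subset\Omega$, hence Theorem \ref{est} yields
\[
\sup_{x\in K}\|M(x)-S(x)\|_2 \le \alpha \sup_{x\in K}\|C+F(S)(x)\|_2 \le \alpha\,\|F(M)-F(S)\|_{L^\infty(\Omega;\Sb^{n\times n})}.
\]

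For the right inequality, set $u:=M-S\in H^\sigma(\Omega;\Sb^{n\times n})$. Linearity of $F$ and the collocation equalities $\lambda_k^{(i,j)}(S)=-C_{ij}=\lambda_k^{(i,j)}(M)$ show that $F(u)(x_k)=0$ at every collocation point. Since $F$ is a first-order linear matrix differential operator with coefficients determined by $f\in C^{\sigma+1}$, a classical Narcowich--Wendland--Ward sampling inequality on the Lipschitz domain $\Omega$, applied componentwise and combined with the norm-equivalence between the Wendland native space and $H^\sigma$ from Definition \ref{defwend}, gives for all sufficiently small $h_{X,\Omega}$
\[
\|F(u)\|_{L^\infty(\Omega;\Sb^{n\times n})} \le C'\,h_{X,\Omega}^{\sigma-1-n/2}\,\|u\|_{H^\sigma(\Omega;\Sb^{n\times n})}.
\]
The loss of one power of $h$ reflects that $F$ contains first derivatives of its argument. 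Since $S$ is the minimum $H^\sigma$-norm element satisfying the same functional constraints as $M$, we have $\|S\|_{H^\sigma}\le\|M\|_{H^\sigma}$ and hence $\|u\|_{H^\sigma}\le 2\|M\|_{H^\sigma}$; setting $\beta:=2C'$ completes the second inequality.

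The main obstacle I anticipate is establishing the sampling inequality with exponent $\sigma-1-n/2$, because the quantity that vanishes on $X$ is not $u$ itself but the first-order expression $F(u)$. This is handled by a Sobolev extension of $u$ from $\Omega$ to $\R^n$ (legitimate since $\Omega$ has Lipschitz boundary) combined with a Bramble--Hilbert/power-function estimate for point functionals of order at most one; the dependence on $\|Df\|_{C^{\sigma-1}(\overline{\Omega})}$ can be absorbed into $C'$. Given this ingredient and the positive-definiteness check used for the left inequality, the two bounds chain to produce the claimed estimate.
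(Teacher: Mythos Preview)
The paper does not actually supply its own proof of this theorem; it cites \cite[Theorem 5.3]{giesl2018kernel} and remarks that the chain \eqref{RBF-est} follows directly from the proof there. Your two-step outline is precisely the route taken in that reference: the right-hand inequality is a sampling/zeros lemma in $H^\sigma$ applied componentwise to $F(M-S)$, which vanishes on $X$, combined with norm-minimality of the interpolant; the left-hand inequality is the perturbation estimate of Theorem~\ref{est}.

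Two small technical points deserve a cleaner treatment. First, Theorem~\ref{est} as \emph{stated} requires $C_2=-F(S)$ to be $C^{s-1}$ and positive definite on all of $\cA(x_0)$, and $M_2$ to be the corresponding integral-formula solution from Theorem~\ref{un}; but $S$ is built from compactly supported Wendland functions, so $F(S)$ vanishes away from $X$ and $-F(S)$ cannot be globally positive definite. What rescues the argument is that $K$ is positively invariant, so $\overline{\gamma^+(K)}=K\subset\Omega$, and the proof of Theorem~\ref{est} only integrates along trajectories inside $\overline{\gamma^+(K)}$; the paper states exactly this localized consequence in the paragraph following Theorem~\ref{est}, and that is what you should invoke rather than the theorem verbatim. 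Second, norm-minimality of $S$ holds in the native-space norm, which is only norm-equivalent to $\|\cdot\|_{H^\sigma}$, so you get $\|M-S\|_{H^\sigma}\le c\,\|M\|_{H^\sigma}$ for some equivalence constant $c$, not literally $2\|M\|_{H^\sigma}$; in fact orthogonality of the error to the interpolant in the native space gives $\|M-S\|_{\cH}\le\|M\|_{\cH}$ directly. Either way the constant is absorbed into $\beta$.
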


Note that in this theorem $\alpha, \beta $ are positive constants independent of $M, S, $ and $ X $; the statement in \eqref{RBF-est} follows directly from the proof
of \cite[Theorem 5.3]{giesl2018kernel}.
The theorem indicates that $S$, itself, is a contraction metric in $K$ provided $h_{X,\Omega}$ is sufficiently small.

\begin{remark} \label{norm-remark}
It is worth mentioning another useful norm estimate for $S \in H^\sigma(\Omega;\Sb^{n\times n}) $, the optimal recovery of $M$ from Theorem \ref{th:S}, over $ \cD \subset \Omega$, a bounded open subset of $\R^n$ with $C^1$ boundary. Let $k\ge 2$ if $n$ is odd and $k\ge 3$ if $n$ is even. Let $S$ be the approximation of $M$, using the Wendland function $\psi_{l,k}$ with $l=\lfloor \frac{n}{2}\rfloor +k+1$ and the collocation points $X$. Note that
the reproducing kernel Hilbert space $\cH(\Omega;\Sb^{n\times n})$ with reproducing kernel $\Phi$ given by the Wendland function is norm-equivalent to the Sobolev space $H^{\sigma}(\Omega;\Sb^{n\times n})$, where $\sigma=k+\frac{n+1}{2}$. Then we have
\begin{equation} \label{norm-remark-eq}
\norm{S}{C^{2}(\overline{\cD};\Sb^{n\times n})} \leq \zeta \norm{M}{H^\sigma(\Omega;\Sb^{n\times n})},
\end{equation}
where $ \zeta > 0 $ is a constant independent of the collocation points $X$, and the approximation $S$. The inequality is proved using that the approximation $S$ is norm-minimal, that is, $\norm{S}{\cH(\Omega;\Sb^{n\times n})} \leq \norm{M}{\cH(\Omega;\Sb^{n\times n})} $; for more details see \cite[Lemma 3.8]{GiHa2015combi}.
\end{remark}

\section{Second Approximation using CPA Interpolation} \label{CPA-section}
In this section we will first provide the necessary definitions and statements about the triangulations and continuous piecewise affine interpolations of a function. Then we consider a verification problem to check whether our criteria for a contraction metric are fulfilled by the interpolated function, and finally we derive error estimates for this process.


\begin{definition}[simplex]
Given vectors $x_0,x_1,\ldots,x_n\in\R^n$ that are affinely independent, i.e.~the vectors $x_1-x_0,x_2-x_0,\ldots,x_n-x_0$ are linearly independent, the convex hull%
\begin{equation*}
  \T = \co(x_0,x_1,\ldots,x_n) := \left\{\sum_{k=0}^n \lambda_k x_k\,:\, \lambda_k\in[0,1]\ \text{and}\ \sum_{k=0}^n\lambda_k=1\right\}%
\end{equation*}
is called an $n$-simplex or simply a simplex.
A set%
\begin{equation*}
  \co(x_{k_0},x_{k_1},\ldots,x_{k_j}) := \left\{\sum_{i=0}^j \lambda_{k_i} x_{k_i}\,:\, \lambda_{k_i}\in[0,1]\ \text{and}\ \sum_{i=0}^j\lambda_{k_i}=1\right\}%
\end{equation*}
with $0\le k_0< k_1 < \ldots <k_j \le n$ and $0\le j <n$ is called a $j$-face of the simplex $\T$.%
\end{definition}

\begin{definition}[Triangulation]\label{scdef}
We call a finite set $\cT=\{\T_\nu\}_\nu$ of $n$-simplices $\T_\nu$ a triangulation in $\R^n$, if two simplices $\T_\nu,\T_\mu\in\cT$, $\mu\neq \nu$, intersect in a common face or not at all.
For a triangulation $\cT$ we define its \emph{domain} and \emph{vertex set} as
$$
\cD_\cT:=\bigcup_{\T_\nu}\T_\nu\ \ \text{and}\ \ \cV_\cT := \{\bx\in\R^n \,:\, \bx\ \text{is a vertex of a simplex in $ \cT$ }\}.
$$
We also say that $\cT$ is a triangulation of the set $\cD_\cT$.\\

For a triangulation $\cT=\{\T_\nu\}$ and constants $ h, d > 0 $, we say that
$\cT$ is \emph{$(h, d)$-bounded} if it fulfills the following conditions:
\begin{itemize}
\item[(i)]
The \textit{diameter} of every simplex $ \T_\nu \in \cT $ is bounded by $h$, that is
\[ h_\nu := \mbox{diam}(\T_\nu) := \max_{x, y \in \T_\nu } \| x - y \|_2 < h. \]
\item[(ii)]
The \textit{degeneracy} of every simplex $ \T_\nu \in \cT $ is bounded by $d$ in the sense that
\[ h_\nu \| X_\nu ^{-1} \|_1 \le d, \]
where $X_\nu := (x_1 ^\nu - x_0 ^\nu, x_2 ^\nu - x_0 ^\nu, \cdots, x_n ^\nu - x_0 ^\nu)^T $ is the so-called \emph{shape matrix} of the simplex $\T_\nu$.
\end{itemize}
\end{definition}

\begin{definition}[CPA interpolation]\label{CPA-def}
Let $\cT$ be a triangulation in $\R^n$  and assume some values $\widetilde{P}_{ij}(x_k)\in\R$ are fixed for every $x_k\in\cV_\cT$ and every $i,j=1,2,\ldots,n$. Then we can uniquely construct a continuous function $P:\cD_\cT\to\R^{n\times n}$, that is affine on each simplex $\T_\nu\in\cT$ in the following way{\,\rm:}
An $x \in \T_\nu  = \co(x_0,\ldots,x_n)$ can be written uniquely as $x=\sum_{k=0}^{n}\lambda_k x_k$ with $\lambda_k\in [0,1]$ and $\sum_{k=0}^{n}\lambda_k=1$ and we define%
\begin{equation*}
  P_{ij}(x) := \sum_{k=0}^{n}\lambda_k \widetilde P_{ij}(x_k)%
\end{equation*}
and%
\begin{equation*}
  P(x) := \begin{pmatrix}
       P_{11}(x) & P_{12}(x)& \cdots & P_{1n}(x) \\
       P_{21}(x) & P_{22}(x)&   \cdots & P_{2n}(x) \\
       \vdots & \vdots & \ddots & \vdots \\
       P_{n1}(x) & P_{n2}(x) & \cdots & P_{nn}(x) \\
     \end{pmatrix}.
\end{equation*}
We refer to the functions $P_{ij}$ and $P$ as the CPA interpolations of the values $\widetilde P_{ij}(x_k)$ and $\widetilde P(x_k)=(\widetilde P_{ij}(x_k))_{i,j=1,\ldots,n}$, respectively.

%

Then we can uniquely define continuous functions
$P_{ij}:\cD_\cT\to\R$ through\,{\rm :}
\begin{enumerate}
  \item[\textit{(i)}]   $P_{ij}(x):=\widetilde P_{ij}(x)$ for every $x\in\cV_\cT$,
  \item[\textit{(ii)}]  $P_{ij}$ is affine on every simplex $\fS_\nu\in\cT$, i.e. there is a vector $w_{ij}^\nu\in\R^n$ and a number $b_{ij}^\nu\in\R$, such that
  $$P_{ij}(x)=(w_{ij}^\nu)^T x+b_{ij}^\nu$$ for all $ x\in \fS_\nu$.
\end{enumerate}
The set of all such continuous and piecewise affine functions $\cD_\cT\to\R$ fulfilling (i) and (ii) is denoted by $\CPA[\cT]$.

Note that for every simplex $\fS_\nu\in\cT$ we have $ \nabla P_{ij}|_{\fS_\nu^\circ} =w_{ij}^\nu$, where $w_{ij}^\nu\in\R^n$ is as in (ii).   \\
Assume $W$ is a matrix-valued function defined on $ \cD_\cT $, fix the values $ \widetilde P(x_k) = W(x_k) $ for every vertex $ x_k \in \cV_\cT $, and continue the procedure mentioned above to create a continuous piecewise affine function $P$. Then we call $P$ the CPA interpolation of the function $W$ on $\cT$.

Note that if $\widetilde{P}(x_k)\in \Sb^{n\times n}$ for all $ x_k \in \cV_\cT $, then $P\colon  \cD_\cT \to\Sb^{n\times n}$.
\end{definition}

\begin{remark}[Orbital derivative]\label{basic}
Let $P(x)$ be as in Definition \ref{CPA-def} and fix a point $x\in\cD^\circ_\cT$.  As shown in the proof of \cite[Lemma 4.7]{GiHa2013CPAmetric}, there exists a $\T_\nu=\co(x_0,\ldots,x_n)\in \cT$ and a number $\theta^*>0$ such that $x+\theta f(x)\in \T_\nu$ for all $\theta\in [0,\theta^*]$.
Then the forward orbital derivative $(P_{ij})'_+(x)$ defined by formula \eqref{fod} (see Remark \ref{for-orb-der-formula}), is given by
\begin{equation*}
  (P_{ij})'_+(x) = w_{ij}^\nu \cdot f(x),%
\end{equation*}
where $w_{ij}^\nu$ was defined in Definition \ref{CPA-def}.
\end{remark}


\begin{lemma}[Error estimates for CPA interpolation] \label{CPA-estimate} 
Let $ \cT = \{\T_\nu\}$ be an $ (h, d) $-bounded triangulation in $\R^n$ and let $ \cD\supset \cD_\cT$ be an open set.  Assume that $ W \in  C^2(\cD;\R^{n \times n}) $ with $\norm{W}{C^2(\cD;\R^{n \times n}) }<\infty$  and define
\begin{equation*}
\gamma :=  1 + \dfrac{d n^{3/2}}{2} .
\end{equation*}
Denote by $W_C$ the CPA interpolation of $ W$ on $\cT$. Then the following
estimates hold true for all $1\le i,j\le n$\,{\rm :}
\begin{eqnarray}
\| W_C (x) - W(x) \|_2 &\le & n h^2  \norm{W}{C^2(\cD;\R^{n \times n})}\ \   \text{for all $x \in \cD_\cT$,} \label{es-eq1}\\
\|\nabla (W_C)^\nu_{ij} - \nabla W_{ij}(x) \|_{1}& \le&  h \gamma \norm{W}{C^2(\cD;\R^{n \times n})}\ \  \text{for all $\T_\nu \in \cT$ and all $x \in \T_\nu$,} \label{es-eq2}\\
\|\nabla (W_C)^\nu_{ij}  \|_{1} &\le& ( 1 +  h \gamma)  \norm{W}{C^2(\cD;\R^{n \times n})} \ \ \text{for all $\T_\nu \in \cT$,}
 \label{es-eq3}
\end{eqnarray}
\end{lemma}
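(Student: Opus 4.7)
The plan is to reduce each of the three matrix-valued estimates to the corresponding scalar estimate on each entry $W_{ij}$, and then convert back to the $C^2$-norm of $W$ via inequality \eqref{matrix-to-component-norm} from Remark \ref{function-norm-inequalities}. Since CPA interpolation acts componentwise by Definition \ref{CPA-def}, the function $(W_C)_{ij}$ is precisely the scalar CPA interpolation of $W_{ij}$ on $\cT$. For \eqref{es-eq1}, on a fixed simplex $\T_\nu=\co(x_0,\ldots,x_n)$ and a point $x=\sum_k\lambda_k x_k\in\T_\nu$, I would Taylor-expand
$$W_{ij}(x_k) = W_{ij}(x) + \nabla W_{ij}(x)^T(x_k-x) + \tfrac{1}{2}(x_k-x)^T H_{W_{ij}}(\xi_k)(x_k-x).$$
Multiplying by $\lambda_k$ and summing, the first-order term vanishes because $\sum_k\lambda_k(x_k-x)=0$, giving the entrywise bound $|(W_C)_{ij}(x)-W_{ij}(x)| \le \tfrac{1}{2}h^2\sup\|H_{W_{ij}}\|_2 \le \tfrac{1}{2}h^2\|W\|_{C^2(\cD;\R^{n\times n})}$ via \eqref{Hessian-C2-norm} and \eqref{matrix-to-component-norm}. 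The matrix-norm inequality $\|A\|_2\le n\|A\|_{\max}$ then produces \eqref{es-eq1}.

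For \eqref{es-eq2}, I exploit that on $\T_\nu$ the interpolant $(W_C)_{ij}$ is affine with constant gradient $w_{ij}^\nu=\nabla(W_C)_{ij}|_{\T_\nu^\circ}$ satisfying
$$X_\nu w_{ij}^\nu = \bigl(W_{ij}(x_k^\nu)-W_{ij}(x_0^\nu)\bigr)_{k=1,\ldots,n},$$
which follows from the definition of the shape matrix and interpolation conditions at the vertices. Taylor-expanding each $W_{ij}(x_k^\nu)-W_{ij}(x_0^\nu)$ produces $X_\nu\nabla W_{ij}(x_0^\nu)$ plus a quadratic-remainder vector $R\in\R^n$ whose entries are bounded via \eqref{Hessian-C2-norm} by $\tfrac{1}{2}h_\nu^2\|W\|_{C^2}$. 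Inverting yields $w_{ij}^\nu-\nabla W_{ij}(x_0^\nu)=X_\nu^{-1}R$, and the degeneracy bound $h_\nu\|X_\nu^{-1}\|_1\le d$ converts this to a term of order $dn^{3/2}h\|W\|_{C^2}/2$ after using $\|R\|_1\le\sqrt{n}\|R\|_2$ and careful control of the quadratic form. Finally, shifting from $\nabla W_{ij}(x_0^\nu)$ to $\nabla W_{ij}(x)$ for a general $x\in\T_\nu$ adds at most $h\|W\|_{C^2}$ by an integral mean-value estimate applied to the Hessian (using \eqref{Hessian-C2-norm}), giving the ``$1$'' in $\gamma=1+dn^{3/2}/2$ and completing \eqref{es-eq2}.

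The third estimate \eqref{es-eq3} follows immediately from \eqref{es-eq2} by the triangle inequality evaluated at any $x\in\T_\nu$,
$$\|w_{ij}^\nu\|_1 \le \|\nabla W_{ij}(x)\|_1 + h\gamma\|W\|_{C^2},$$
together with $\|\nabla W_{ij}(x)\|_1 = \sum_k|\partial_k W_{ij}(x)| \le \|W_{ij}\|_{C^1(\cD;\R)} \le \|W\|_{C^2(\cD;\R^{n\times n})}$, where the first step is the definition of the $C^1$-norm and the second uses \eqref{matrix-to-component-norm}. The main obstacle is the bookkeeping of $n$-factors in \eqref{es-eq2}: one must decide whether to expand around $x_0^\nu$ or a general $x\in\T_\nu$, and must route the Hessian remainder through the natural $\|\cdot\|_1$-norm on $\R^n$ in order to pair with the degeneracy bound on $\|X_\nu^{-1}\|_1$ rather than picking up spurious powers of $n$. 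Once this accounting is performed the three estimates drop out in sequence, and no smoothness beyond $W\in C^2(\cD;\R^{n\times n})$ is required.
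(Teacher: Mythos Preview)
Your approach is essentially the same as the paper's: reduce to the scalar case componentwise, invoke the Hessian bound \eqref{Hessian-C2-norm}, and pass back to the matrix $C^2$-norm via \eqref{matrix-to-component-norm} together with $\|A\|_2\le n\|A\|_{\max}$. In fact you give more detail than the paper, which proves only \eqref{es-eq1} explicitly and for \eqref{es-eq2}--\eqref{es-eq3} simply cites the scalar result \cite[Lemma 4.15]{GiHa2015combi}, noting that those two estimates are already stated componentwise. Your Taylor-expansion argument for \eqref{es-eq1} even yields a spare factor of $\tfrac{1}{2}$, and your sketch for \eqref{es-eq2} (expand around $x_0^\nu$, invert the shape matrix, use the degeneracy bound, then shift to a general $x\in\T_\nu$) is exactly the standard argument behind the cited lemma; only the precise routing of the $n$-powers to land on $\gamma=1+dn^{3/2}/2$ needs to be filled in carefully.
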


\begin{proof}
This lemma is a counterpart of \cite[Lemma 4.15]{GiHa2015combi} for matrix-valued functions. We just prove inequality \eqref{es-eq1}, where we have obtained a sharper estimate. Observe that
\begin{equation*}
\| W_C (x) - W(x) \|_{\text{max}} :=  \max_{i, j = 1, 2, \ldots, n} \big| (W_{C})_{ij} (x) - W_{ij}(x) \big|,
\end{equation*}
in which $ W_{ij} \in C^2(\cD;\R) $ and $W_{ij}, (W_{C})_{ij}$ are the components of $W$ and $W_C$, respectively. Now we can use the ideas of \cite[Lemma 4.15]{GiHa2015combi} and inequality \eqref{Hessian-C2-norm} to obtain
\begin{equation*}
\big| (W_{C})_{ij} (x) - W_{ij}(x) \big| \leq h^2 \, \max_{z \in \cD_{\cT}} \norm{H_{W_{ij}}(z)}{2}
\leq h^2 \norm{W_{ij}}{C^2(\cD;\R)},
\end{equation*}
where $ H_{W_{ij}}(z) $ denotes the Hessian of $W_{ij}$ at $z$. Considering inequality \eqref{matrix-to-component-norm} of Remark \ref{function-norm-inequalities} yields that
\begin{equation*}
\| W_C (x) - W(x) \|_{\text{max}} 
~  \leq ~\max_{i, j = 1, 2, \ldots, n} h^2 \, \| W_{ij} \|_{C^2(\cD;\R)} 
\end{equation*}
for all $x\in \cD_\cT$.
It only remains to consider norm equivalence relations \eqref{norm-equivalence2} \eqref{matrix-to-component-norm} to see \eqref{es-eq1}  holds true.
The other two inequalities are essentially the same as \cite[Lemma 4.15]{GiHa2015combi}, as they are expressed component-wise.
\end{proof}

In the sequel, we will apply this lemma to $S$, the optimal recovery function of $M$ from Theorem \ref{th:S}. It is worth mentioning that when using Wendland functions $\Phi(\cdot):=\psi_{l,k}(c \|\cdot\|_2)$ with $ l:=\lfloor \frac{n}{2}\rfloor+k+1$ as reproducing kernels, $S$ is a linear combination of these functions and their first derivatives, see \eqref{S-form}; hence, $S\in C^{2k-1}(\R^n;\R^{n \times n})$.\\
Therefore, in order to be able to apply the lemma, we only consider Wendland functions with $k \geq 2$ (for more details, see for example \cite[section 3.2]{rbf2007giesl} or \cite[chapter 10]{wendland2005scattered}).

A CPA interpolation of $S$, or more exactly the values $P(x_k)=S(x_k)$ for all $x_k\in \cV_\cT$ for some triangulation $\cT$, that satisfies the constraints of the following semi-definite feasibility problem, is necessarily a contraction metric. 
Later we prove a converse statement: if $S$ is a contraction metric and $d\ge 2$ is fixed, then for any $h>0$ small enough  its CPA interpolation on an $(h,d)-$bounded triangulation will satisfy the constraints of the verification problem. Such triangulations are easily generated, see Remark \ref{trfullrem}.

\begin{VP} \label{VP}
Given is a system $\dot x=f(x)$, $f\in C^3(\R^n;\R^n)$, and a triangulation $\cT$ in $\R^n$. The verification problem has the following constants, variables, and constraints.

{\bf Constants:}
The constants used in the problem are%
\begin{enumerate}
\item $\epsilon_0>0$ -- lower bound on the matrix $P(x_k)$.
\item The diameter $h_\nu$ of each simplex $\T_\nu\in \cT${\rm :}
\begin{equation*}
  h_\nu :=\diam (\T_\nu)=\max_{x,y\in \T_\nu}\|x-y\|_2.
\end{equation*}

\item Upper bounds $B_\nu$ on the second-order derivatives of the components $f_k$ of $f$ on each simplex $\T_\nu\in\cT${\rm :}
\begin{equation*}
B_{\nu} \ge \max_{x\in \T_\nu \atop i,j,k=1, 2, \ldots, n }\left|\frac{\partial^2 f_k}{\partial x_i\partial x_j}(x)\right|.\label{B}
\end{equation*}
\item Upper bounds $B_{3,\nu}$ on the third-order derivatives of the components $f_k$ of $f$ on each simplex $\T_\nu\in\cT${\rm :}
\begin{equation*}
B_{3,\nu} \ge  \max_{x\in \T_\nu \atop i,j,k,l=1,\ldots,n }\left|\frac{\partial^3 f_l}{\partial x_i\partial x_j\partial x_k}(x)\right|.\label{B3}
\end{equation*}
\end{enumerate}

{\bf Variables:}
The variables of the problem are%
\begin{enumerate}
\item $P_{ij}(x_k) \in \mathbb R$ for all $1\le i\le j\le n$ and all vertices $x_k\in \cV_\cT$. 
 For $1\le i\le j\le n$ the value $P_{ij}(x_k)$ is the $(i,j)$-th entry of the $(n\times n)$ matrix $P(x_k)$. The matrix $P(x_k)$ is assumed to be symmetric and therefore these components determine it.%
\item $C_\nu \in \mathbb R_0^+$ for all simplices $\T_\nu\in \cT$ -- upper bound on $P$ in $\T_\nu$.
\item $D_\nu \in \mathbb R_0^+$ for all simplices $\T_\nu\in \cT$ -- upper bound on the derivative of $P_{ij}$ in $\T_\nu$.
\end{enumerate}

{\bf Constraints:}

\begin{enumerate}
\item {\bf Positive definiteness of $\mathbf P$}

For each $x_k\in \cV_\cT$\,{\rm :}%
\begin{equation*}
  P(x_k)\succeq  \epsilon_0 I.%
\end{equation*}

\item {\bf Upper bound on $\mathbf P$}

For each $x_k\in \cV_\cT$\,{\rm :}%
\begin{equation*}
  P(x_k)\preceq C_\nu I.%
\end{equation*}

\item {\bf Bound on the derivative of $\mathbf P$}

For each simplex $\T_\nu\in\cT$ and all $1\le i\le j\le n$\,{\rm :}%
\begin{equation*}
  \|w^{\nu}_{ij} \|_1 \le D_{\nu}.%
\end{equation*}
Here $w^{\nu}_{ij} = \nabla P_{ij}\big|_{\T_\nu^\circ}$ for all $x\in \T_\nu$, see Remark \ref{nugradrem} for details.%

\item {\bf Negative definiteness of $\mathbf A_{\nu}$}

For each simplex $\T_\nu = \co(x_0,\ldots,x_n)\in\cT$ and each vertex $x_k$ of $\T_\nu$\,{\rm :}%
\begin{equation*}
 -\epsilon_0 I \succeq A_{\nu}(x_k) + h_\nu^2  E_{\nu}I.
\end{equation*}
Here%
\begin{equation}\label{A-def}
  A_{\nu}(x_k) := P(x_k) D f(x_k) + D f(x_k)^T P(x_k) + (w_{ij}^\nu \cdot f(x_k))_{i,j=1, 2, \ldots, n},%
\end{equation}
where $D f(x_k)$ is the Jacobian matrix of $f$ at $x_k$, $(w_{ij}^\nu \cdot f(x_k))_{i,j=1, 2, \ldots, n}$ denotes the symmetric $(n\times n)$-matrix with entries $w_{ij}^\nu \cdot f(x_k)$ and $w^{\nu}_{ij}$ is defined as in \eqref{defw}, and for a fixed $\T_\nu$  and $i,j$ it is a constant vector independent of the vertex $x_k$ of $\T_\nu$. Further, %
\begin{equation*}
  E_{\nu} := n^{2} (1+4\sqrt{n}) B_\nu D_\nu+2 \, n^{3} B_{3,\nu}C_\nu.%
\end{equation*}
\end{enumerate}
\end{VP}

\begin{remark}\label{nugradrem}
In \textit{Constraints 3} and \textit{4} above, the gradient $w^\nu_{ij}$ of the affine function $P_{ij}\big|_{\T_\nu}$ on the simplex $\T_\nu=\co(x_0,\ldots,x_n)$, i.e.\ $\nabla P_{ij}\big|_{\T_\nu^\circ} = w_{ij}^\nu$, is given by the expression%
\begin{equation}\label{defw}
  w^{\nu}_{ij} := X^{-1}_{\nu}\left(\begin{array}{c}P_{ij}(x_1) - P_{ij}(x_0)\\ \vdots\\ P_{ij}(x_n) - P_{ij}(x_0)\end{array}\right)\in\mathbb R^n,%
\end{equation}
where $X_{\nu}=\left(x_1-x_0,x_2-x_0,\ldots,x_n-x_0\right)^T\in\mathbb R^{n\times n}$ is the so-called shape-matrix of the simplex $\T_\nu$. 

The \textit{Constraints 3} are indeed linear and can be implemented using the auxiliary variables $D_\nu^{k}$ and the constraints%
\begin{equation*}
  -D_\nu^{k}\le [w^\nu_{ij}]_k \le D_\nu^{k}\ \ \text{for $k=1, \dots, n$},%
\end{equation*}
where $[w^\nu_{ij}]_k $ is the $k$-th component of the vector $w^\nu_{ij}$, and setting $D_\nu=\sum_{k=1}^n D_\nu^{k}$.
\end{remark}

A feasible solution to Verification Problem \ref{VP}  delivers a symmetric matrix $P(x_k) = \left(P_{ij}(x_k)\right)_{i,j=1, 2, \ldots, n}$ at each vertex $x_k$ of the triangulation $\cT$ and values $C_\nu$ and $D_\nu$ for each simplex $\T_\nu\in\cT$.

We recall a lemma before expressing our final results.

%
%

\begin{lemma}[Operator estimate over a triangulation]\label{4.12} \cite[Lemma 4.9]{HaKa2019datalimit}

Assume $P$ is defined as in Definition \ref{CPA-def} from a feasible solution $P_{ij}(x_k)$ to the Verification Problem \ref{VP}. Fix a point $x\in\cD^\circ_\cT$ and a corresponding simplex $\T_\nu=\co(x_0,x_1,\ldots,x_n)\in \cT$ as in Definition \ref{basic}. Set %
\begin{equation*}
  A_{\nu}(y) := P(y) D f(y) + D f(y)^T P(y) + \left( w_{ij}^\nu \cdot f(y) \right)_{i,j=1, 2, \ldots , n}%
\end{equation*}
for all $y\in \T_\nu$. Then we have the following estimate with $x=\sum_{k=0}^n \lambda_k x_k$, $\lambda_k\ge 0$ and $\sum_{k=0}^n\lambda_k=1$ {\rm :}%
\begin{equation}\label{Aest}
  \left\|A_{\nu}(x)-\sum_{k=0}^n \lambda_k A_{\nu}(x_k)\right\|_2 \le h_\nu^2 E_\nu,
\end{equation}
in particular
\begin{equation*}
A_{\nu}(x) \preceq \, \sum_{k=0}^n \lambda_k A_{\nu}(x_k)+ h_\nu^2 E_\nu I.
\end{equation*}
\end{lemma}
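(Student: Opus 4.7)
The plan is to decompose the difference into three pieces corresponding to the three summands defining $A_\nu$, exploit the affinity of $P$ on $\T_\nu$ combined with the barycentric identity $\sum_{k=0}^n\lambda_k(x_k-x)=0$ to cancel the constant and linear Taylor terms, and finally estimate the quadratic remainders in terms of the data constants $B_\nu$, $B_{3,\nu}$, $C_\nu$, $D_\nu$.

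Concretely, I would write
\[
A_\nu(x)-\sum_{k=0}^n\lambda_k A_\nu(x_k)\;=\;T_1+T_1^T+T_3,
\]
where $T_1:=P(x)Df(x)-\sum_k\lambda_k P(x_k)Df(x_k)$ and $[T_3]_{ij}:=w_{ij}^\nu\cdot\bigl(f(x)-\sum_k\lambda_k f(x_k)\bigr)$. Since $P$ is affine on $\T_\nu$ we have $P(x)=\sum_k\lambda_k P(x_k)$, and therefore $T_1=\sum_k\lambda_k P(x_k)\bigl[Df(x)-Df(x_k)\bigr]$.

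For $T_1$, a second-order Taylor expansion of each scalar entry $\partial_j f_\ell$ around $x$ writes $Df(x_k)-Df(x)$ as a linear term in $(x_k-x)$, built from second derivatives of $f$, plus a quadratic remainder involving third derivatives. The affinity of $P$, namely $P_{i\ell}(x_k)=P_{i\ell}(x)+w_{i\ell}^\nu\cdot(x_k-x)$, together with $\sum_k\lambda_k(x_k-x)=0$, then forces the $P(x)$-contribution to the linear Taylor term to vanish, leaving only a term involving the symmetric ``outer-product'' matrix $\Sigma:=\sum_k\lambda_k(x_k-x)(x_k-x)^T$, which satisfies $\|\Sigma\|_2\le h_\nu^2$ as a convex combination of rank-one matrices of norm at most $h_\nu^2$. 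Bounding entries by $\|w_{i\ell}^\nu\|_1\le D_\nu$ and $|\partial_m\partial_j f_\ell|\le B_\nu$, this contribution is of order $h_\nu^2 B_\nu D_\nu$ up to dimensional factors. The quadratic Taylor remainder is controlled using $\|P(x_k)\|_2\le C_\nu$ (which follows from Constraint 2 of the Verification Problem and symmetry of $P(x_k)$), $\|x_k-x\|_2\le h_\nu$, and $\|H_{\partial_j f_\ell}\|_2\le nB_{3,\nu}$, giving a contribution of order $h_\nu^2 B_{3,\nu}C_\nu$ up to $n$-factors.

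The third piece $T_3$ is handled by the standard linear-interpolation error bound for $f$: since $\sum_k\lambda_k(x_k-x)=0$, a second-order Taylor expansion yields $|f_\ell(x)-\sum_k\lambda_k f_\ell(x_k)|\le\tfrac{1}{2}h_\nu^2 n B_\nu$, and hence $|[T_3]_{ij}|\le\|w_{ij}^\nu\|_1\cdot\tfrac{1}{2}nh_\nu^2 B_\nu\le\tfrac{n}{2}h_\nu^2 B_\nu D_\nu$. I would then convert entrywise bounds to the matrix $2$-norm via $\|\cdot\|_2\le\|\cdot\|_F\le n\|\cdot\|_{\max}$ (as recorded in Remark \ref{function-norm-inequalities}), use $\|T_1^T\|_2=\|T_1\|_2$, and collect all constants to obtain $\|A_\nu(x)-\sum_k\lambda_k A_\nu(x_k)\|_2\le h_\nu^2 E_\nu$ with $E_\nu=n^2(1+4\sqrt{n})B_\nu D_\nu+2n^3 B_{3,\nu}C_\nu$. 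The semi-definite conclusion $A_\nu(x)\preceq\sum_k\lambda_k A_\nu(x_k)+h_\nu^2 E_\nu I$ is then immediate from \eqref{Aest} because the matrix on the left is symmetric, so its largest eigenvalue is bounded by its $2$-norm. The main obstacle is the careful bookkeeping of the dimensional constants ($n$, $\sqrt{n}$) produced by passing from scalar/entrywise Taylor estimates to the matrix $2$-norm; the key structural point that prevents a merely $O(h_\nu)$ estimate is the first-order Taylor cancellation enabled by the affinity of $P$ on the simplex together with the barycentric identity.
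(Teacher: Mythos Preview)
The paper does not give its own proof of this lemma; it simply cites \cite[Lemma 4.9]{HaKa2019datalimit}. Your outline is the standard argument and is structurally correct: the decomposition $T_1+T_1^T+T_3$, the use of the affinity of $P$ on $\T_\nu$ to write $T_1=\sum_k\lambda_k P(x_k)[Df(x)-Df(x_k)]$, the splitting $P_{i\ell}(x_k)=P_{i\ell}(x)+w_{i\ell}^\nu\cdot(x_k-x)$, and the barycentric cancellation $\sum_k\lambda_k(x_k-x)=0$ killing the first-order terms are exactly the ingredients that produce an $O(h_\nu^2)$ bound with the constants $B_\nu,B_{3,\nu},C_\nu,D_\nu$ entering as you describe. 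The interpolation bound for $T_3$ and the passage from $\|\cdot\|_{\max}$ to $\|\cdot\|_2$ are also handled correctly.

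The only point where your sketch is genuinely incomplete is the one you flag yourself: matching the precise constant $E_\nu=n^2(1+4\sqrt{n})B_\nu D_\nu+2n^3 B_{3,\nu}C_\nu$ requires tracking the $n$- and $\sqrt{n}$-factors through each of the four contributions (the $P(x)$--remainder term, the $w^\nu$--linear term, its transpose, and $T_3$) and the norm conversions. In particular, the cross term coming from $w_{i\ell}^\nu\cdot(x_k-x)$ multiplied by the \emph{quadratic} Taylor remainder of $\partial_j f_\ell$ is $O(h_\nu^3)$ and should either be absorbed or, as in the cited reference, avoided by organizing the Taylor expansion slightly differently; you should make explicit how you dispose of it so that the final bound is exactly $h_\nu^2 E_\nu$ rather than $h_\nu^2 E_\nu+O(h_\nu^3)$.
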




We define the CPA metric $P$ by affine interpolation on each simplex. The following theorem explains why we call Problem \ref{VP} a Verification Problem as it shows that if the finitely many constraints at vertices are satisfied, then the interpolated CPA function is a Riemannian contraction metric on $\cD_\cT^\circ$.

\begin{theorem}[CPA contraction metric]\label{le1} \hspace*{1cm} \\
	Let $f\in C^3(\mathbb R^n,\mathbb R^n)$.
Assume the constraints of  Verification Problem \ref{VP} are satisfied for some values $P_{ij}(x_k)$, $C_\nu$, $D_\nu$.  Then the matrix-valued function $P$, where $P(x)$ is interpolated from the values $P_{ij}(x_k)$ as in  Definition \ref{CPA-def}, is a Riemannian metric contracting in any compact set $K\subset \cD_\cT^\circ$.
\end{theorem}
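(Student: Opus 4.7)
The plan is to split the proof into (a) verifying that $P$ is a Riemannian metric in the sense of Definition \ref{metric}, and (b) establishing the contraction inequality on $K$. Part (a) is essentially immediate from the construction: $P$ is continuous and piecewise affine on $\cD_\cT^\circ$, hence locally Lipschitz; it is symmetric because the matrix $P(x_k)$ at each vertex is symmetric by the convention in Verification Problem \ref{VP} and affine interpolation preserves symmetry; and positive definiteness follows from Constraint~1, which gives $P(x_k) \succeq \epsilon_0 I$ at every vertex, so that for $x = \sum_k \lambda_k x_k$ in a simplex the identity $P(x) = \sum_k \lambda_k P(x_k) \succeq \epsilon_0 I$ holds, since the Loewner order is preserved under convex combinations.

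For part (b), fix $x \in K$. By Remark \ref{basic} there exist a simplex $\T_\nu = \co(x_0,\ldots,x_n) \in \cT$ and $\theta^* > 0$ with $x + \theta f(x) \in \T_\nu$ for $\theta \in [0,\theta^*]$, so that $(P_{ij})'_+(x) = w_{ij}^\nu \cdot f(x)$ for all $i,j$. Together with the definition \eqref{A-def} of $A_\nu$ this yields
\begin{equation*}
P(x) Df(x) + Df(x)^T P(x) + P'_+(x) = A_\nu(x).
\end{equation*}
Writing $x = \sum_{k=0}^n \lambda_k x_k$ with $\lambda_k \ge 0$, $\sum_k \lambda_k = 1$, Lemma \ref{4.12} provides the simplex-wide bound
\begin{equation*}
A_\nu(x) \preceq \sum_{k=0}^n \lambda_k A_\nu(x_k) + h_\nu^2 E_\nu I,
\end{equation*}
while Constraint~4 at each vertex of $\T_\nu$ reads $A_\nu(x_k) + h_\nu^2 E_\nu I \preceq -\epsilon_0 I$. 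Taking the convex combination and substituting into the previous display, the two $h_\nu^2 E_\nu I$ terms cancel and we obtain $A_\nu(x) \preceq -\epsilon_0 I$, valid for every $x \in \cD_\cT^\circ$ (independently of which simplex $x$ lies in).

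To promote this to the contraction condition of Definition \ref{contraction metric}, compactness of $K$ and continuity of $P$ give $\Lambda := \sup_{x \in K} \|P(x)\|_2 < \infty$, hence $P(x) \preceq \Lambda I$ on $K$. Setting $\nu := \epsilon_0/(2\Lambda) > 0$ yields
\begin{equation*}
A_\nu(x) \preceq -\epsilon_0 I = -2\nu \Lambda I \preceq -2\nu P(x)\quad\text{for all } x\in K,
\end{equation*}
which by the remark immediately following Definition \ref{contraction metric} is equivalent to $\mathcal{L}_P(x) \le -\nu$, i.e.\ $P$ is a contraction metric on $K$ with exponent $-\nu$. The main obstacle is the passage from vertex-only inequalities to inequalities holding throughout each simplex; this is exactly what Lemma \ref{4.12} achieves, and its error term $h_\nu^2 E_\nu I$ is engineered to be absorbed by the matching slack in Constraint~4. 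A secondary technical point is the interpretation of $P'_+(x)$ at points lying on faces shared by several simplices, where $P$ is not differentiable; Remark \ref{basic} resolves this by using the flow direction $f(x)$ to select a unique simplex into which the trajectory initially enters.
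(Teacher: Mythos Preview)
Your proof is correct and follows essentially the same route as the paper: symmetry and positive definiteness from the vertex data and convex interpolation, then $A_\nu(x)\preceq -\epsilon_0 I$ via Lemma~\ref{4.12} combined with Constraint~4, with the error term $h_\nu^2 E_\nu I$ cancelling exactly as you describe. The only difference is in the last step: the paper uses Constraint~2 directly, i.e.\ $P(x_k)\preceq C_\nu I$, to obtain the explicit simplex-wise rate $\mathcal{L}_P(x)\le -\epsilon_0/C_\nu$ (so Constraint~2 is not idle in their argument), whereas you bypass Constraint~2 and instead invoke compactness of $K$ to produce a global upper bound $\Lambda$ on $\|P\|_2$ and hence the rate $-\epsilon_0/(2\Lambda)$. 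Both are valid; the paper's version gives a more explicit, simplex-local contraction exponent tied to the verification data, while your version shows that Constraint~2 is not logically required for Theorem~\ref{le1} itself.
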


\begin{proof}
Let $ x \in \cD_\cT$ be an arbitrary point, $x=\sum_{k=0}^n \lambda_k x_k$, $\lambda_k\ge 0$ and $\sum_{k=0}^n\lambda_k=1$, with a corresponding $ \T_{\nu} \in \cT $. The symmetry of $P(x)$ follows directly from  $P_{ij}(x_k)=P_{ji}(x_k)$ assumed in \textit{Variables 1} of Verification Problem \ref{VP}:
\begin{equation*}
P_{ij}(x) = P_{ij} \left(\sum_{k=0}^n \lambda_k x_k \right)
			 =  \sum_{k=0}^n \lambda_k P_{ij}(x_k)
			 = \sum_{k=0}^n \lambda_k P_{ji}(x_k)
			 = P_{ji}(x).
\end{equation*}
For positive definiteness, we have $ P(x_k) \succeq  \epsilon_0 I $ for each $x_k\in \cV_\cT$
by \textit{Constraints 1}, so
\begin{equation*}
P(x) =  \sum_{k=0}^n \lambda_k P(x_k)  \succeq \sum_{k=0}^n \lambda_k \, \epsilon_0 I = \epsilon_0 I.
\end{equation*}

Now let $x\in K\subset \cD_\cT^\circ$. Then there is a simplex  $ \T_{\nu} \in \cT $ with $x\in \T_\nu$ as well as $x+\theta f(x) \in \T_\nu$ for all $\theta \in [0,\theta^*]$ with $\theta^*>0$. Then, as
expressed in Remark \ref{basic}, we can show that
$ w^{\nu}_{ij} = \nabla P_{ij}\big|_{\T_\nu^\circ}(x)$ and  $(P_+')_{ij}(x)=w^\nu_{ij}\cdot f(x)$.
Hence,
\begin{eqnarray*}
	{\cal L}_P(x)&=&\max_{v^TP(x)v=1}L_P(x;v)\\
	&=&\max_{v^TP(x)v=1}\frac{1}{2}v^T[P(x)Df(x)+Df(x)^TP(x)+
	P_+'(x)]v\\
		&=&\max_{v^TP(x)v=1}\frac{1}{2} v^TA_\nu(x)v\\
	&\le&\max_{v^TP(x)v=1}\sum_{k=0}^n\lambda_kv^T
	[A_\nu(x_k)+h_\nu^2E_\nu I] v\\
	&\le&-\epsilon_0\max_{\sum_{k=0}^n\lambda_k v^TP(x_k)v=1}\sum_{k=0}^n\lambda_k\|v\|^2_2\\
	&=& -\frac{\epsilon_0}{C_\nu}\max_{\sum_{k=0}^n\lambda_k v^TP(x_k)v=1}\sum_{k=0}^n\lambda_k v^TP(x_k)v\\
	&=&-\frac{\epsilon_0}{C_\nu}\ <\ 0
	\end{eqnarray*}
in which we have used Lemma \ref{4.12}, \textit{Constraints 2}, and \textit{Constraints 4}.
%
\end{proof}

In order to measure how good the CPA interpolant $P$ of the RBF approximation $S$ of the contraction metric $M$ is, we need to check two criteria in correspondence to two properties of the contraction metric. First, how close $P$ is to $M$, and second, how close $F(P)$ is to $F(M)$. The following lemma provides these estimates.

\begin{lemma} [{\small Error estimate for RBF-CPA approximation of the contraction metric}] \label{RBF-CPA-estimate}
Let $k\ge 2$ if $n$ is odd and $k\ge 3$ if $n$ is even. Assume that $x_0$ is an exponentially stable equilibrium of $\dot x =f(x)$ where
$f\in C^{\sigma + 1}(\R^n;\R^n)$, with $ \sigma \in \N $ and $\sigma \geq k + \frac{n+1}{2}$.
Let $C\in\Sb^{n\times n}$ be a positive definite matrix and $M\in C^{\sigma}(\cA(x_0);\Sb^{n\times n})$ be
the solution of the PDE (\ref{matrixeq4}) from Theorem \ref{un}.
Let $K \subset \Omega \subset \cA(x_0)$ be a positively invariant and compact set, where $\Omega$ is open with Lipschitz boundary; and let
$S \in H^\sigma(\Omega;\Sb^{n\times n}) $ be the optimal recovery of $M$ from Theorem \ref{th:S} with kernel given by the Wendland function
$\psi_{l,k}$ with $l=\lfloor \frac{n}{2}\rfloor +k+1$ and collocation points $X\subset \Omega$. Finally, let
$P$ be the CPA interpolation of $ S $ on an $ (h, d) $-bounded triangulation $ \cT = \{\T_\nu\} $ with
$  K \subset \cD_{\cT}^\circ \subset \cD_{\cT} \subset \Omega   $ that satisfies the constraints of Verification Problem \ref{VP}.
Then, we have for all small enough $h_{X,\Omega}>0$ the following error estimates\,{\rm:}
\begin{eqnarray}
\sup_{x\in K} \norm{M(x)-P(x)}{2} & \le & \Big( \beta \,  h_{X,\Omega}^{\sigma-1-n/2} + \zeta \, n \, h^2 \,  \Big) \|M\|_{H^\sigma(\Omega;\Sb^{n\times n})}
,\label{estimateM}\\
\sup_{x\in K} \norm{F(M-P)(x)}{2} & \le & \left( \frac{\beta}{\alpha} \,  h_{X,\Omega}^{\sigma-1-n/2} \,
+ \eta \, h \,  \norm{f}{C^{1}(\Omega;\R^{n})} \right)
 \|M\|_{H^\sigma(\Omega;\Sb^{n\times n})},~~~~~~~ \label{estimateF}
\end{eqnarray}
where $\zeta$ is the constant from Remark \ref{norm-remark}, $ \gamma =  1 + \frac{d n^{3/2}}{2} $, and $ \eta =n \, \zeta \, \left(2  h + \gamma \right) $.
\end{lemma}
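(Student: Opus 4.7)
The plan is to insert $S$ as an intermediate object and use the triangle inequality, reducing both estimates to an ``RBF part'' ($M$ vs.\ $S$) and a ``CPA part'' ($S$ vs.\ $P$). For \eqref{estimateM} I would write
$\sup_{x\in K}\|M(x)-P(x)\|_2\le \sup_{x\in K}\|M(x)-S(x)\|_2+\sup_{x\in \cD_\cT}\|S(x)-P(x)\|_2$.
The first term is directly controlled by Theorem \ref{thmfinal}, which gives the factor $\beta h_{X,\Omega}^{\sigma-1-n/2}\|M\|_{H^\sigma(\Omega;\Sb^{n\times n})}$. For the second term I would apply the CPA error estimate \eqref{es-eq1} of Lemma \ref{CPA-estimate} to $W=S$, noting that $S\in C^{2k-1}\subset C^2$ by the choice of Wendland kernel, which yields $n h^2 \|S\|_{C^2(\overline{\cD_\cT};\R^{n\times n})}$; then I would replace $\|S\|_{C^2}$ by $\zeta\|M\|_{H^\sigma(\Omega;\Sb^{n\times n})}$ via \eqref{norm-remark-eq} of Remark \ref{norm-remark}. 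Summing gives \eqref{estimateM}.

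For \eqref{estimateF} I would exploit the linearity of $F$ and split $F(M-P)=F(M-S)+F(S-P)$. The first summand is handled by reading the chain of inequalities in Theorem \ref{thmfinal} between the middle and right-hand expressions, which immediately gives $\|F(M)-F(S)\|_{L^\infty(\Omega;\Sb^{n\times n})}\le (\beta/\alpha)\, h_{X,\Omega}^{\sigma-1-n/2}\|M\|_{H^\sigma}$. For $F(S-P)$ I expand the definition and treat the three pieces separately: for $Df^T(S-P)$ and $(S-P)Df$ I combine $\|Df(x)\|_2\le \|f\|_{C^1(\Omega;\R^n)}$ from inequality \eqref{D-C1-norm-Rn} with the $\|S-P\|_2\le nh^2\zeta\|M\|_{H^\sigma}$ bound already obtained, producing a term of size $2\,n h^2\zeta\,\|f\|_{C^1}\|M\|_{H^\sigma}$.

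The main obstacle is the orbital-derivative term $(S-P)'_+$, because $P$ is only piecewise $C^1$. Here I would use Remark \ref{basic}: on the interior of each simplex $\T_\nu$, $(P_{ij})'_+(x)=w^\nu_{ij}\cdot f(x)$, while $(S_{ij})'(x)=\nabla S_{ij}(x)\cdot f(x)$. Hence entrywise
$$|(S_{ij}-P_{ij})'_+(x)|\le \|\nabla S_{ij}(x)-w^\nu_{ij}\|_1\,\|f(x)\|_\infty\le h\gamma\,\|S\|_{C^2}\,\|f\|_{C^1(\Omega;\R^n)}$$
by inequality \eqref{es-eq2} of Lemma \ref{CPA-estimate}, and then the matrix-norm inequality $\|A\|_2\le n\|A\|_{\max}$ from \eqref{norm-equivalence2} gives
$\|(S-P)'_+(x)\|_2\le n h\gamma\,\zeta\,\|f\|_{C^1}\|M\|_{H^\sigma}$.
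Adding the three bounds yields $\|F(S-P)(x)\|_2\le n\zeta(2h+\gamma)\,h\,\|f\|_{C^1}\|M\|_{H^\sigma}=\eta\,h\,\|f\|_{C^1}\|M\|_{H^\sigma}$, and combining with the $F(M-S)$ estimate produces \eqref{estimateF}. The only subtle point beyond bookkeeping is ensuring the Dini-style orbital derivative of $P$ is indeed $w^\nu_{ij}\cdot f(x)$ at the relevant points, which is exactly what Remark \ref{basic} supplies at every interior point of $\cD_\cT$.
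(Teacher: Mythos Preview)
Your proposal is correct and follows essentially the same route as the paper: insert $S$ via the triangle inequality, invoke Theorem~\ref{thmfinal} for the $M$--$S$ part, and combine Lemma~\ref{CPA-estimate} with Remark~\ref{norm-remark} for the $S$--$P$ part, treating the orbital-derivative contribution entrywise via Remark~\ref{basic} and the bound $\|A\|_2\le n\|A\|_{\max}$. The only minor point you gloss over is that Remark~\ref{norm-remark} requires a bounded open set with $C^1$ boundary, so the paper explicitly interposes such a set $\cD$ with $\cD_\cT\subset\cD\subset\Omega$ before applying \eqref{norm-remark-eq}; you should do the same rather than work directly on $\overline{\cD_\cT}$.
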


\begin{proof}
First, note that by Theorem \ref{thmfinal} we have
\begin{eqnarray*}
\sup_{x\in K} \norm{M(x)-P(x)}{2} & \le & \sup_{x\in K} \big( \norm{M(x)-S(x)}{2}+ \norm{S(x)-P(x)}{2} \big) \\
& \leq & \beta h_{X,\Omega}^{\sigma-1-n/2} \|M\|_{H^\sigma(\Omega;\Sb^{n\times n})} + \sup_{x\in K} \norm{S(x)-P(x)}{2} .
\end{eqnarray*}
Next we provide an estimate for the latter term over each simplex $\fS_\nu$, using Lemma \ref{CPA-estimate}, Remark \ref{norm-remark}, and noting that $ K \subset \cD_{\cT} \subset \Omega $ allows us to introduce an open set $\cD$ with $C^1 $ boundary such that $\cD_{\cT} \subset \cD \subset \Omega $:
\begin{eqnarray*}
\sup_{x\in K} \norm{S(x)-P(x)}{2} & \leq & \sup_{\nu} \sup_{x \in \fS_\nu } \|S(x)-P(x)\|_{2} \\
& \leq & n \, h^2 \, \|S\|_{C^{2}(\cD;\Sb^{n\times n})}\\
& \leq &  \zeta \,n \, h^2 \, \|M\|_{H^\sigma(\Omega;\Sb^{n\times n})}.
\end{eqnarray*}
This shows the first estimate. \\
The same procedure can be used for the second claimed estimate:
\begin{eqnarray*}
\sup_{x\in K} \norm{F(M-P)(x)}{2} & \le & \sup_{x\in K} \big( \norm{F(M)(x)-F(S)(x)}{2}  + \norm{F(S)(x)-F(P)(x)}{2} \big)  \\
& \le & \frac{\beta}{\alpha} h_{X,\Omega}^{\sigma-1-n/2} \|M\|_{H^\sigma(\Omega;\Sb^{n\times n})} + \sup_{x\in K} \norm{F(S-P)(x)}{2}.
\end{eqnarray*}
Now, for the second term, let $x\in K\subset \cD_{\cT} $, so there exists a simplex such that $x\in  \fS_\nu$. Then by using Remark \ref{basic} we have
\begin{small}
\begin{equation*}
F(S-P)(x) = \big( S(x)-P(x) \big) Df(x) + Df(x)^T \big( S(x)-P(x) \big) +
\Big( \big[\nabla S_{ij}(x) - w_{ij}^{\nu}\big] \cdot f(x) \Big)_{i,j=1, 2, \ldots , n}.
\end{equation*}
\end{small}
Observe that $ \norm{Df(x)^T}{2} =  \norm{Df(x)}{2} $, and by inequality \eqref{D-C1-norm-Rn} of Remark \ref{function-norm-inequalities} we get
\begin{equation*}
 \sup_{\nu} \sup_{x \in \fS_\nu } \norm{S(x)-P(x)}{2} \norm{Df(x)}{2}  \leq
n \, h^2 \, \zeta \, \norm{M}{H^\sigma(\Omega;\Sb^{n\times n})} \norm{f}{C^{1}(\Omega;\R^{n})}.
\end{equation*}
Let $Q_\nu(x) = \Big(  \nabla S_{ij}(x)  \cdot f(x) - w_{ij}^\nu \cdot f(x) \Big)_{i,j=1, 2, \ldots , n} $. From the H\"older inequality and inequality \eqref{es-eq2} of Lemma \ref{CPA-estimate} we have the following estimate
\begin{eqnarray}\label{Q-estimate}
\sup_{\nu} \sup_{x \in \fS_\nu } \norm{Q_\nu(x)}{2} & \leq & n \sup_{\nu} \sup_{x \in \fS_\nu } \norm{Q_\nu(x)}{\max} \nonumber \\
&  \leq &  n \sup_{\nu} \sup_{x \in \fS_\nu } \max_{i, j = 1, 2, \cdots, n}
\norm{ \nabla S_{ij}(x)  - w_{ij}^\nu}{1} \norm{f(x)}{\infty} \nonumber \\
&\leq&  n \, h \, \gamma  \, \norm{S}{C^2(\cD;\Sb^{n \times n})}\norm{f}{C^{0}(\Omega;\R^{n})} \\
&\leq&  n \, h \, \gamma \, \zeta \,
\norm{M}{H^\sigma(\Omega;\Sb^{n\times n})} \norm{f}{C^{1}(\Omega;\R^{n})}. \nonumber
\end{eqnarray}
Putting all terms together delivers
\begin{eqnarray*}
\sup_{x \in K} \norm{F(S)(x)-F(P)(x)}{2} & \leq & \sup_{\nu} \sup_{x \in \fS_\nu } \norm{F(S)(x)-F(P)(x)}{2} \\
& \leq & n \, h \, \zeta \, \left( 2  h + \gamma \right) \norm{f}{C^{1}(\Omega;\R^{n})} \norm{M}{H^\sigma(\Omega;\Sb^{n\times n})}.
\end{eqnarray*}
It is then just a simplification of coefficients to get \eqref{estimateF} and the proof is complete.
\end{proof}

The last theorem of this section proves that a suitable CPA interpolation of the solution  to a suitable  optimal recovery problem,   will indeed be a contraction metric.


\begin{remark}\label{trfullrem}
The following observation is useful for the statement of the next theorem: Given an open set $\cD$, compact set $ \widetilde{K} \subset \cD$, and $d=2\sqrt{n}$, one can always construct an $(h,d)$-bounded triangulation $\cT$ such that $\widetilde{K} \subset D_\cT^\circ\subset D_\cT\subset \cD $.  Indeed, by \cite[Lemma 4.9]{GiHa2015combi} the so-called scaled standard triangulation
$\cT^{\rm std}_\rho$ is $(h,4n)$-bounded for any $h>\sqrt{n}\,\rho$.
In \cite[Remark 2]{hafstein2017study} a sharper bound is derived, which shows that $\cT^{\rm std}_\rho$ is even
$(h,2\sqrt{n})$-bounded.  By setting $3\epsilon :=\dist(\widetilde{K},\R^n\setminus \cD)=\min\{\|x-y\|\,:\, x\in \widetilde{K},y\in \R^n\setminus \cD\}$ and
$K_\epsilon := \{x\in\R^n\,:\, \dist(x,\widetilde{K})< \epsilon\}$, it is easy to see that with $0<\rho \le \epsilon/\sqrt{n}$ the triangulation
$
\T:=\{\T_\nu\in \cT^{\rm std}_\rho\,:\, \T_\nu \cap K_\epsilon \not=\emptyset\}
$
fulfills $ \widetilde{K} \subset D_\cT^\circ\subset D_\cT\subset \cD $.
\end{remark}

\begin{theorem}[RBF-CPA contraction metric] \label{RBF-CPA contraction metric} 
Let $k\ge 2$ if $n$ is odd and $k\ge 3$ if $n$ is even. Define $\sigma=k+\frac{n+1}{2}$ and assume that $x_0$ is an exponentially stable equilibrium of $\dot x =f(x)$ where
 $f\in C^{\sigma+1}(\mathbb R^n;\mathbb R^n)$.
Let $C\in\Sb^{n\times n}$ be a positive definite matrix and $M\in C^{\sigma}(\cA(x_0);\Sb^{n\times n})$ be
the solution of the PDE (\ref{matrixeq4}) from Theorem \ref{un}, i.e.~PDE (\ref{matrixeq2}) with a constant right-hand-side.

Let $ \Omega \subset \cA(x_0)$ be open and bounded with Lipschitz boundary and
let ${\cal D} \subset\Omega$ be positively invariant, open set with $C^1$ boundary, such that $x_0\in \cal D $ and $\overline{\cD} \subset \Omega$.

Fix a compact set $ \widetilde{K} \subset \cD$ and constants
$$
d\ge 2\sqrt{n},\ \  B^* \ge \max_{x\in \overline{\cD}  \atop i,j,k=1, 2, \ldots, n }\left|\frac{\partial^2 f_k}{\partial x_i\partial x_j}(x)\right|,\ \ \text{and}\ \
B_{3}^* \ge  \max_{x\in \overline{\cD} \atop i,j,k,l=1,\ldots,n }\left|\frac{\partial^3 f_l}{\partial x_i\partial x_j\partial x_k}(x)\right|.
$$

Then there exist constants $h^*_{X,\Omega}, h^* > 0 $, such that for any set of collocation points $X\subset \Omega$ with fill distance $  h_{X,\Omega} \leq h^*_{X,\Omega} $ and any $(h,d)$-bounded triangulation $\cT$ with $ \widetilde{K}\subset D_\cT^\circ\subset D_\cT\subset \cD $ and $ h < h^* $ the following holds:
Suppose that $S$ is the optimal recovery of $M$ from Theorem \ref{th:S} with kernel given by the Wendland function
$\psi_{l,k}$ with $l=\lfloor \frac{n}{2}\rfloor +k+1$. Fix the constants and variables from Verification Problem \ref{VP} as follows for all $\T_\nu \in \cT$, $x_k\in \cV_\cT$, and $1\le i \le j \le n$\,{\rm :}
$$
P_{ij}(x_k)=S_{ij}(x_k), \ \ C_\nu= \max\{\|P(x)\|_2\,:\,x\ \text{vertex of $\T_\nu$} \}, \ \
D_\nu =\norm{w_{ij}^\nu}{1},
$$
$$
B^* \ge B_{\nu} \ge \max_{x\in \T_\nu \atop i,j,k=1, 2, \ldots, n }\left|\frac{\partial^2 f_k}{\partial x_i\partial x_j}(x)\right|,\ \
B_{3}^* \ge B_{3,\nu} \ge  \max_{x\in \T_\nu \atop i,j,k,l=1,\ldots,n }\left|\frac{\partial^3 f_l}{\partial x_i\partial x_j\partial x_k}(x)\right|,
$$
and
$$
\epsilon_0 = \frac{1}{3} \min(\lambda_0,\lambda_1) >0, \ \ \text{where}\ \ \lambda _0 I \preceq  M(x)\ \ \text{for all $x\in \overline{\Omega}$} \ \ \text{and} \ \ \lambda_1 I \preceq C.
$$
Then the constraints of Verification Problem \ref{VP} are fulfilled by these values.

In particular, we can assert that the CPA interpolation $P$ of $S$ on $\cT$ is a contraction metric on $\widetilde{K}$.
%
\end{theorem}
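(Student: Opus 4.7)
The plan is to verify each of the four constraint groups in Verification Problem \ref{VP} for the assignments given in the statement, and then invoke Theorem \ref{le1} to conclude that the CPA interpolant $P$ is a contraction metric on $\widetilde K$. Before fixing the final thresholds $h^*_{X,\Omega}$ and $h^*$, I will establish uniform bounds $C^*,D^*,E^*$ on $C_\nu, D_\nu, E_\nu$ that are independent of $\nu$ and of the particular triangulation. Since $C_\nu = \max_x \|P(x)\|_2 \le \|M\|_{L^\infty(\overline\cD)} + \|M-P\|_{L^\infty(\widetilde K)}$, estimate \eqref{estimateM} of Lemma \ref{RBF-CPA-estimate} yields such a $C^*$ once we impose some preliminary $h^*_{X,\Omega}, h^* \le 1$. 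Similarly, estimate \eqref{es-eq3} of Lemma \ref{CPA-estimate} combined with Remark \ref{norm-remark} gives $D_\nu \le (1+h^*\gamma)\zeta\|M\|_{H^\sigma(\Omega;\Sb^{n\times n})} =: D^*$. Once $C^*, D^*$ are fixed, the formula defining $E_\nu$ together with $B_\nu \le B^*$ and $B_{3,\nu} \le B^*_3$ furnishes the uniform bound $E^* := n^2(1+4\sqrt n)B^* D^* + 2n^3 B^*_3 C^*$.

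With these bounds in place, Constraints~2 and~3 hold by the very definitions of $C_\nu$ and $D_\nu$. For Constraint~1, I would use that $M(x) \succeq \lambda_0 I$ on $\overline\Omega$ and shrink $h^*_{X,\Omega}, h^*$ so that estimate \eqref{estimateM} guarantees $\sup_{x \in \widetilde K}\|M(x)-P(x)\|_2 \le \tfrac{2}{3}\lambda_0$; then $P(x_k) \succeq \tfrac{1}{3}\lambda_0 I \succeq \epsilon_0 I$ at every vertex $x_k$.

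The main work is Constraint~4. Since $M$ solves PDE \eqref{matrixeq4} with constant right-hand side, $F(M)(x) = -C \preceq -\lambda_1 I$ for every $x \in \cA(x_0)$. For $x$ in the interior of a simplex $\T_\nu$ the orbital derivative of $P$ equals $(w^\nu_{ij}\cdot f(x))_{ij}$, so $F(P)(x)$ agrees with $A_\nu(x)$ of \eqref{A-def} there; by continuity, the same identification applies at every vertex of $\T_\nu$, and estimate \eqref{estimateF} therefore yields
\[
A_\nu(x_k) \preceq \bigl(-\lambda_1 + \|F(M-P)\|_{L^\infty(\widetilde K)}\bigr)I,
\]
hence
\[
A_\nu(x_k) + h_\nu^2 E_\nu I \preceq \bigl(-\lambda_1 + \|F(M-P)\|_{L^\infty(\widetilde K)} + h^2 E^*\bigr)I.
\]
Both quantities besides $-\lambda_1$ inside the parentheses tend to zero as $h^*_{X,\Omega}, h^* \to 0$, so by further shrinking the two thresholds their sum can be made $\le \lambda_1 - \epsilon_0$, which is at least $\tfrac{2}{3}\lambda_1$ by the choice of $\epsilon_0$. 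This delivers Constraint~4.

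The main subtlety is to interleave these threshold choices consistently: $C^*, D^*, E^*$ themselves depend on having some preliminary $h^* \le 1$ (through the factor $1 + h\gamma$) and on $h^*_{X,\Omega}$ being small enough that $\|M-P\|_{L^\infty}$ stays bounded. Once a crude preliminary choice fixes these constants, the subsequent shrinking of $h^*_{X,\Omega}$ and $h^*$ needed for Constraints~1 and~4 is unproblematic. All four constraints of Verification Problem \ref{VP} then hold, and Theorem \ref{le1} immediately implies that $P$ is a Riemannian contraction metric on any compact subset of $\cD_\cT^\circ$, in particular on $\widetilde K$.
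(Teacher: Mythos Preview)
Your overall strategy matches the paper's: fix uniform bounds $C^*,D^*,E^*$, note that Constraints~2 and~3 hold by definition of $C_\nu,D_\nu$, get Constraint~1 from the $\|M-P\|$ estimate, and get Constraint~4 by comparing $A_\nu(x_k)$ with $F(M)(x_k)=-C$. Two points need tightening.

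\textbf{Wrong compact set for the estimates.} You apply \eqref{estimateM} and \eqref{estimateF} with $K=\widetilde K$, but the constraints must be verified at all vertices $x_k\in\cV_\cT\subset\cD_\cT$, and by hypothesis $\widetilde K\subset\cD_\cT^\circ$, so vertices on $\partial\cD_\cT$ need not lie in $\widetilde K$. The estimates on $\widetilde K$ therefore do not control $P(x_k)$ or $A_\nu(x_k)$ at every vertex. The fix is to apply Lemma~\ref{RBF-CPA-estimate} (and Theorem~\ref{thmfinal}) with $K=\overline{\cD}$, which is compact and positively invariant and contains $\cD_\cT$; this is exactly what the paper does. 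Note also that Lemma~\ref{RBF-CPA-estimate} formally lists ``satisfies the constraints of Verification Problem~\ref{VP}'' as a hypothesis, which would make your use of it circular; however its proof does not use that hypothesis, so you may safely invoke the estimates.

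\textbf{Identification of $A_\nu(x_k)$ with $F(P)(x_k)$.} Your claim that ``by continuity, the same identification applies at every vertex'' is not quite right: at a vertex the forward orbital derivative of $P$ uses the gradient of whichever simplex $x_k+\theta f(x_k)$ enters, which need not be $\T_\nu$, so in general $F(P)(x_k)\ne A_\nu(x_k)$. What \emph{is} true is that $F(M)$ and $A_\nu$ are both continuous on $\T_\nu$ and agree with $F(M)-F(P)$ on $\T_\nu^\circ$, so the bound $\|F(M)(x)-A_\nu(x)\|_2\le\|F(M-P)\|_{L^\infty(\overline{\cD})}$ extends to $x_k$ by continuity. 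The paper avoids this subtlety entirely by using $P(x_k)=S(x_k)$ to rewrite $A_\nu(x_k)=F(S)(x_k)+\bigl((w^\nu_{ij}-\nabla S_{ij}(x_k))\cdot f(x_k)\bigr)_{ij}$, then bounding the correction via \eqref{es-eq2} and $F(S)-F(M)$ via \eqref{RBF-est} directly; this never requires interpreting $F(P)$ at a vertex.
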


\begin{proof}
First note that since $\overline{\Omega}$ is compact and $M$ is positive definite by Theorem \ref{un}, there are constants $\lambda_0,\lambda_1,\Lambda_0>0$ such that for all $x\in \overline{\Omega}$ we have
	\begin{eqnarray}
		\lambda _0 I &\preceq&  M(x)\  \preceq\  \Lambda_0 I \label{ab1}\\
		\lambda_1I& \preceq&C.\label{ab2}
		\end{eqnarray}
		Furthermore, define
		\begin{eqnarray*}
			C^*&:=&\Lambda_0+\frac{2}{3}\lambda_0,\\
			D^*&:=&(1+\gamma)\zeta \|M\|_{H^\sigma(\Omega;\Sb^{n\times n})},\\
			E^*&:=&n^2(1+4\sqrt{n})B^*D^*+2n^3B_3^*C^*,
			\end{eqnarray*}
where $\gamma=1+\frac{dn^{\frac32}}{2}$ is the constant from Lemma \ref{CPA-estimate}, $\alpha,\beta>0$ are the constants from Theorem \ref{thmfinal} with $K=\overline{\cD}$, and $\zeta>0$ is the constant from Remark \ref{norm-remark}.

	Now set
	\begin{eqnarray*}
		h^*&:=&\min\left(\sqrt{\frac{\lambda_0}{3\zeta n \|M\|_{H^\sigma(\Omega;\Sb^{n\times n})}}}
	,\frac{\lambda_1}{3\left( n\gamma \zeta  \|f\|_{C^0(\Omega;\mathbb R^n)}  \|M\|_{H^\sigma(\Omega;\Sb^{n\times n})}+E^*\right)},1\right),\\
		h^*_{X,\Omega}&:=&\min\left(\frac{\lambda_0}{3\beta \|M\|_{H^\sigma(\Omega;\Sb^{n\times n})}},\frac{\lambda_1\alpha}{3\beta \|M\|_{H^\sigma(\Omega;\Sb^{n\times n})}}\right)^{1/(\sigma-1-n/2)}.
		\end{eqnarray*}
		
Note that the $C_\nu$s and $D_\nu$s are defined as the minimal number such that \textit{Constraints 2} and \textit{Constraints 3} of Verification Problem \ref{VP} are satisfied.
Now we verify that \textit{Constraints 1} are fulfilled.  
		
First note that by the construction method of Theorem \ref{th:S}, we know that the $S(x_k)$ and hence the $P(x_k) $ are  symmetric matrices.

We have for all $x\in  D_\cT \subset \cD \subset \overline{\cD}\subset \Omega$ that
\begin{eqnarray*}
	P(x)&=&M(x)-M(x)+P(x)\\		
		&\succeq&\left(\lambda_0-\Big( \beta \,  h_{X,\Omega}^{\sigma-1-n/2} + \zeta \, n \, h^2 \,  \Big) \|M\|_{H^\sigma(\Omega;\Sb^{n\times n})}	
		\right)I
			\succeq\frac{\lambda_0}{3}I \succeq\epsilon_0 I,
			\end{eqnarray*}
where we used inequalities \eqref{ab1}, \eqref{estimateM} with $K=\overline{\cD}$, and the definitions of $h^*,h^*_{X,\Omega}$, and $\epsilon_0$.
Thus, \textit{Constraints 1} hold true.

We now show that $C_\nu\le C^*$.
			
			We have for all $x\in  D_\cT \subset \cD \subset \overline{\cD}\subset \Omega$, similarly to above, that
			\begin{eqnarray*}
				P(x)&=&M(x)-M(x)+P(x)\\	
				&\preceq&\left(\Lambda_0+ \Big( \beta \,  h_{X,\Omega}^{\sigma-1-n/2} + \zeta \, n \, h^2 \,  \Big) \|M\|_{H^\sigma(\Omega;\Sb^{n\times n})}		
				\right)I\\	
	& \preceq  & \left(\Lambda_0+\frac{2\lambda_0}{3}\right)I \\
	&  \preceq & C^*.
\end{eqnarray*}
Since $C_\nu$ were chosen as the smallest constants to satisfy \textit{Constraints 2}, we must have $0<C_\nu\le C^*$.

%

We show that $D_\nu\le D^*$.
Consider a simplex $\T_\nu\in\cT$ and let $1\le i\le j\le n$.
\begin{eqnarray*}	\|w^{\nu}_{ij} \|_1 &=&
\|	\nabla P_{ij}\big|_{\T_\nu^\circ}\|_1\\
&\le&(1+h\gamma)\|S\|_{C^2(\overline{\cD};\Sb^{n\times n})}\\
&\le&(1+\gamma)\zeta\|M\|_{H^\sigma(\Omega;\Sb^{n\times n})}\\
	& \leq & D^*,
\end{eqnarray*}
where we used inequalities \eqref{es-eq3}, \eqref{norm-remark-eq}, $h\le h^*\le 1$, and the definition of $D^*$.
Since $D_\nu$ were chosen as the smallest constants to satisfy \textit{Constraints 3}, we have $0<D_\nu\le D^*$.

To show that \textit{Constraints 4} are fulfilled, it is advantageous to first derive the following upper bound on $E_{\nu}$;
\begin{eqnarray*}
E_{\nu} & = & n^{2} (1+4 \sqrt{n}) B_\nu D_\nu+2 \, n^{3} B_{3,\nu}C_\nu \\
& \leq & n^{2} (1+4 \sqrt{n}) B^* D^*+ 2 n^3  B^*_3 C^* \\
&=&E^*.
\end{eqnarray*}
%

To conclude the proof fix a simplex  $\T_\nu\in\cT$ and let $x_k$ be one of its vertices. Then $x_k\in  D_\cT \subset \cD \subset \overline{\cD}\subset \Omega$. Since $P(x_k)=S(x_k)$ we get by \eqref{es-eq2}, and \eqref{Q-estimate}
\begin{eqnarray*}
	A_\nu(x_k) &=&
	P(x_k)Df(x_k)+Df(x_k)^TP(x_k)+(w^\nu_{ij}\cdot f(x_k))_{i,j=1,2,\ldots,n}\\
	 &=&
	S(x_k)Df(x_k)+Df(x_k)^TS(x_k)+
	(\nabla S_{ij}(x_k)\cdot f(x_k))_{i,j=1,2,\ldots,n}\\
	&&
	+((w^\nu_{ij}-\nabla S_{ij}(x_k))\cdot f(x_k))_{i,j=1,2,\ldots,n}\\
	  &\preceq  & {F(S)(x_k)+n \cdot\max_{i,j=1,\ldots,n}\|w^\nu_{ij}-\nabla S_{ij}(x_k)\|_1\ \sup_{x\in {\Omega}} \|f(x)\|_\infty I} \\
	   &\preceq  &
	F(M)(x_k)+F(S)(x_k)-F(M)(x_k)+nh\gamma\|S\|_{C^2(\overline{\cD};\Sb^{n\times n})}\|f\|_{C^0(\Omega;\mathbb R^n)}I\\
	&\preceq  &
	-C+ \left(\frac{\beta}{\alpha} \,  h_{X,\Omega}^{\sigma-1-n/2} +nh\gamma\zeta \|f\|_{C^0(\Omega;\mathbb R^n)}\right)\,\|M\|_{H^\sigma(\Omega;\Sb^{n\times n})}I,
\end{eqnarray*}
where the last inequality follows by \eqref{RBF-est} and \eqref{norm-remark-eq}.

Hence,
\begin{eqnarray*}
	\lefteqn{
A_\nu(x_k) + h_\nu^2 E_\nu I}\\
  &\preceq  &	\left(-\lambda_1+ \left(\frac{\beta}{\alpha} \,  h_{X,\Omega}^{\sigma-1-n/2} +nh\gamma\zeta
\|f\|_{C^0(\Omega;\R^n)}\right)\,\|M\|_{H^\sigma(\Omega;\Sb^{n\times n})}+(h^*)^2 E^*\right)
 I\\
&\preceq  &
\left(-\lambda_1+ \frac{2}{3}\lambda_1\right) I \preceq -\epsilon_0 I,
\end{eqnarray*}
by  \eqref{ab2} and the definitions of $h^* $, $h^*_{X,\Omega}$, and $\epsilon_0$.  This concludes our proof.
\end{proof}

\section{Examples} \label{examples-section}
When applying the method to examples, we choose the symmetric and positive definite matrix $C$ on the right-hand side of \eqref{matrixeq2} to be the identity matrix, $C=I$.
We will first review the steps of the method in Section \ref{method-subsection} and then provide two examples.

\subsection{The Method}\label{method-subsection}
Given is a system $\dot x=f(x)$, with $f\in C^{\sigma + 1}(\R^n;\R^n)$, where $ \sigma=k+\frac{n+1}{2}$ and $k\ge 2$ if $n$ is odd and $k\ge 3$ if $n$ is even, so that the minimum smoothness needed for the contraction metric $M$ and its optimal recovery $S$ is guaranteed (by Theorems \ref{thmfinal} and \ref{RBF-CPA contraction metric}).

The idea is to to increase the number of collocation points and vertices gradually so that we obtain a small enough fill distance and fine enough triangulation.
Theorem \ref{RBF-CPA contraction metric} ensures that after finitely many repetitions the conditions of Verification Problem \ref{VP} will be satisfied. In other words, this is a semi-decidable problem, i.e.~if there exist a contraction metric, then we can compute one in a
finite number of steps.   The steps of the method are as follows:

\begin{enumerate}
\item[STEP 0.] Fix $d \ge 2\sqrt{n}$, $h_\text{collo}>0$, $h_\text{triang}>0$, $c>0$, $k\ge 2$ if $n$ is odd and $k\ge 3$ if $n$ is even, and  the Wendland function $\psi_0(r):=\psi_{l,k}(cr)$ with $l=\lfloor\frac{n}{2}\rfloor+k+1$. 
     Denote  $\psi_{q+1}(r)=\frac{1}{r}\frac{d\psi_q}{dr}(r)$ for $q=0,1$.  Further, fix the compact set $\widetilde K\subset \R^n$, where we want to compute a
    contraction metric for the system, the upper bounds $B^*$ and $B^*_3$ as in Theorem \ref{RBF-CPA contraction metric}, and an open set $\Omega \supset \widetilde K$.

\item[STEP I.]
    Choose a set of pairwise distinct points $X=\{x_1,\ldots,x_N\} $ in $\Omega$ as collocation points with fill distance $h_{X,\Omega} \le h_\text{collo}$.
To obtain a solution of the optimal recovery problem based on RBF approximation we follow these steps:

\begin{enumerate}
\item[1.]
Compute the coefficients $ b_{k,\ell,i,j,\mu,\nu} $ with
\begin{eqnarray}\label{b-coeff}
b_{k,\ell,i,j,\mu,\nu}&=&
\psi_0(\|x_k-x_\ell\|_2)\bigg[\sum_{p=1}^n Df_{pi}(x_\ell)Df_{p\mu}(x_k)\delta_{\nu j}+Df_{\mu i}(x_\ell) Df_{j\nu}(x_k)\nonumber\\
&&
+Df_{i\mu}(x_k) Df_{\nu j}(x_\ell)+\delta_{i\mu} \sum_{p=1}^n Df_{p\nu}(x_k)Df_{pj}(x_\ell)\bigg]\nonumber\\
&&+\psi_1(\|x_k-x_\ell\|_2)\langle x_k-x_\ell,f(x_k)\rangle \left[Df_{\mu i}(x_\ell)\delta_{\nu j}+\delta_{i\mu} Df_{\nu j}(x_\ell)\right]\nonumber\\
&&+\psi_1(\|x_k-x_\ell\|_2)\langle x_\ell-x_k,f(x_\ell)\rangle \left[Df_{i\mu}(x_k)\delta_{\nu j}+\delta_{i\mu}Df_{j\nu}(x_k)\right]\nonumber\\
&&-\psi_1(\|x_k-x_\ell\|_2)\langle f(x_\ell),f(x_k)\rangle\delta_{i\mu}\delta_{j\nu}\nonumber\\
&&+\psi_2(\|x_k-x_\ell\|_2))\langle x_k-x_\ell,f(x_k)\rangle)\langle x_\ell-x_k,f(x_\ell)\rangle\delta_{i\mu}\delta_{j\nu}.
\end{eqnarray}
for $ 1 \leq k, \ell \leq N $, and $ 1 \leq i, j, \mu, \nu \leq n $ (see \cite[Subsection 3.2]{giesl-cont-1} for more details).

\item[2.] Calculate the coefficients $ c_{k,\ell,i,i,\mu,\mu} $ with
\begin{eqnarray}\label{c-form}
c_{k,\ell,i,i,\mu,\mu}&=&b_{k,\ell,i,i,\mu,\mu}\nonumber\\
c_{k,\ell,i,i,\mu,\nu}&=&\frac{1}{2}\left(b_{k,\ell,i,i,\mu,\nu}+b_{k,\ell,i,i,\nu,\mu}\right)\nonumber\\
c_{k,\ell,i,j,\mu,\mu}&=&b_{k,\ell,i,j,\mu,\mu}
=
\frac{1}{2}\left(b_{k,\ell,i,j,\mu,\mu}+b_{k,\ell,j,i,\mu,\mu}\right)\nonumber\\
c_{k,\ell,i,j,\mu,\nu}&=&\frac{1}{2}\left(b_{k,\ell,i,j,\mu,\nu}+b_{k,\ell,i,j,\nu,\mu}\right)\nonumber
\\
&=&\frac{1}{4}\left(b_{k,\ell,i,j,\mu,\nu}+b_{k,\ell,j,i,\nu,\mu}+b_{k,\ell,i,j,\nu,\mu}
+b_{k,\ell,j,i,\mu,\nu}
	\right)
\end{eqnarray}
where we assume $\mu<\nu$ and $i<j$.

\item[3.]
Determine $\gamma_k^{(\mu,\nu)}$, by solving the linear system
\begin{eqnarray}\label{LIN}
	\sum_{k=1}^N\sum_{1\le \mu\le \nu\le n} c_{k,\ell,i,j,\mu,\nu}\gamma_k^{(\mu,\nu)}
	=(F (S)(x_\ell))_{i,j}
		=\lambda_\ell^{(i,j)}(S)
=	-C_{ij}
\end{eqnarray}
for $ 1 \leq \ell \leq N $, and $ 1 \leq i \leq j \leq n $.  Note that \eqref{LIN} is a system of $Nn(n+1)/2$ equations in $Nn(n+1)/2$ unknowns.

\item[4.]
Compute $\beta_k\in \Sb^{n\times n}$ from $\gamma_k$; recalling that
\begin{eqnarray*}
\beta_k^{(j,i)}&=&\beta_k^{(i,j)}=\frac{1}{2}\gamma_k^{(i,j)}  \mbox{~~~~if~~} i\not= j, \\
\beta_k^{(i,i)}&=&\gamma_k^{(i,i)}.
\end{eqnarray*}

\item[5.]
We  now have a formula for the optimal recovery
\begin{eqnarray}\label{S-form}
S(x)  &= & \sum_{k=1}^N \bigg[ \psi_0(\|x_k-x\|_2)\left[ Df(x_k)\beta_k+ \beta_k Df (x_k)^T \right] \nonumber \\
  && ~~~~~ + \, \psi_1(\|x_k-x\|_2)\langle x_k-x,f(x_k)\rangle \beta_k\bigg].
\end{eqnarray}
\end{enumerate}

\item[STEP II.]  Fix an $(h,d)$-bounded triangulation $\cT$ with $h \le  h_\text{triang}$ and $\cD_\cT=\widetilde K$. 
Moreover, fix constants $B_\nu \le B^*$ and $B_{3,\nu} \le B^*_3$ satisfying \eqref{B} and \eqref{B3}. Compute the values $S(y)$ at the vertices of the triangulation $y\in\cV_\cT$ and check if they are positive definite. If not, decrease $h_\text{collo}$ by a factor, e.g.~$h_\text{collo} \leftarrow h_\text{collo} /2$, and go back to STEP I. 
\item[STEP III.]
Use the formulas in  Theorem \ref{RBF-CPA contraction metric} to compute the $C_\nu$s and $D_\nu$s and
check whether {\it Constraints 4} of Verification Problem \ref{VP} are fulfilled ({\it Constraints 2 and 3} are automatically fulfilled).
If not, then reduce $h_\text{triang}$ by a factor, e.g.~$h_\text{triang} \leftarrow h_\text{triang}/8$, and repeat STEP II.
If the conditions still don't hold, decrease $h_\text{collo}$ by a factor, e.g.~$h_\text{collo} \leftarrow h_\text{collo} /2$, and go back to STEP I.
\item[STEP IV.]
Build the contraction metric $P:\widetilde K \to \Sb^{n\times n}$ as the CPA interpolation of the values $P(y)$, $y\in\cV_\cT$,  as suggested in Definition \ref{CPA-def}.
\end{enumerate}

\begin{remark}
Note that in most applications it is more practical to use a relaxed version of the procedure above to compute a contraction metric.  If the matrices $P(y)$,  $y\in\cV_\cT$, in STEP II are positive definite in a reasonably large
part of $\widetilde K$, then one can proceed to STEP III.   Further, if additionally  {\it Constraints 4} of Verification Problem \ref{VP} are fulfilled in a reasonably large part of $\widetilde K$ in STEP III, then one can proceed to STEP IV.
The CPA interpolation $P$ will then not be a contraction metric on the whole of $\widetilde K$, but on the subset where it is both positive definite and fulfills {\it Constraints 4} of Verification Problem \ref{VP}.  We use this
relaxed procedure in the examples below.
\end{remark}

\subsection{Van der Pol System}
As an example, we consider the classical van der Pol equation with reversed time
\begin{eqnarray}
\left\{\begin{array}{lcl}	\dot{x}&=&-y\\
	\dot{y}&=&x-3(1-x^2)y\end{array}\right.\label{pol}
\end{eqnarray}
and denote the right-hand side by $f(x,y)$.
It is well known to have an exponentially stable equilibrium at the origin with basin of
attraction bounded by an unstable periodic orbit.  We demonstrate  the applicability of our method to this well known example.

The kernel given by  Wendland's function
$$ \psi_{6,4}(r)=(1-cr)_+^{10} \big(2145 (cr)^4+2250 (c r)^3+1050 (cr)^2 + 250cr+25 \big)$$
with $c=0.9$ is used with corresponding RKHS $H^\sigma$ with $\sigma=4+\frac{2+1}{2}=5.5$. 
We used $ N = 1926 $ collocation points and a hexagonal grid \cite{iske2} to cover the area inside the periodic orbit. Then, we calculated the CPA verification over the rectangle
$[-2.5,~2.5] \times [-5.5,~5.5] $ with $2200^2 $ vertices, see Figure \ref{fig1_pol}.

\begin{figure}[ht]
\centering	
	\includegraphics[height =5cm, width=0.5 \textwidth]{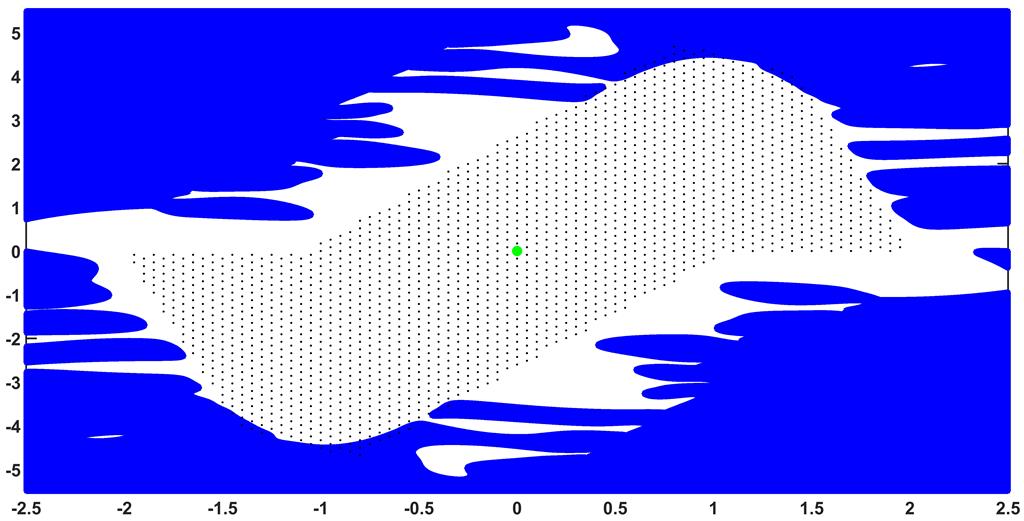}~%
	\includegraphics[height=5cm, width=0.5 \textwidth]{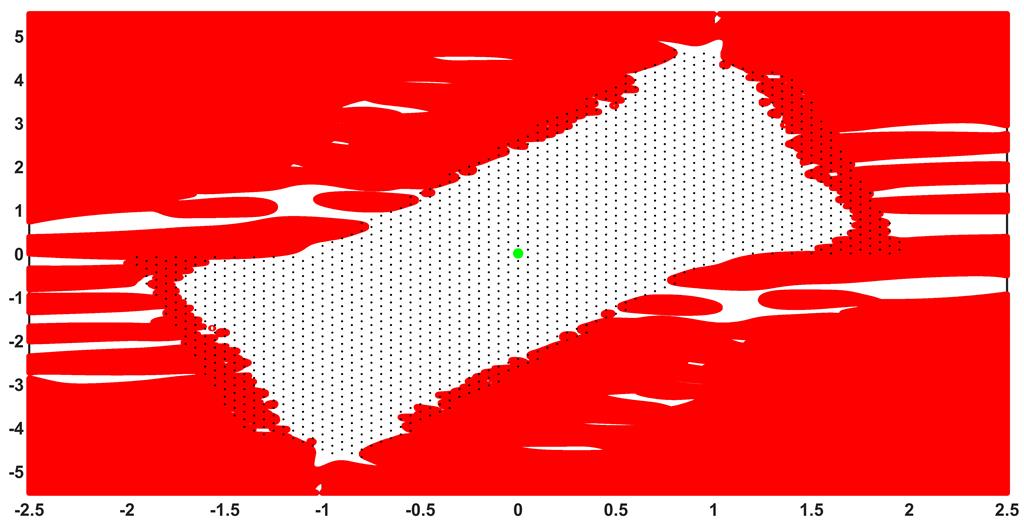}
	\caption{{\footnotesize The black dots show the $1926$ collocation points. The blue stars (left) indicate the vertices where $P(x)$ is not positive definite. The red circles (right) indicate the vertices where \textit{Constraints 4} of the Verification Problem \ref{VP} are not satisfied. The green circle indicates the equilibrium of the system at $(0,0)$, and the triangulation is over the area $[-2.5,~2.5] \times [-5.5,~5.5] $ with $2200^2 $ vertices.}}\label{fig1_pol}
\end{figure}

This example was already used in \cite{giesl2018kernel} and \cite{giesl-cont-1} to illustrate the RBF approximation of the contraction metric and one can compare this result with them.
Here we are  able to rigorously verify the conditions of a contraction metric for the CPA interpolation, while in  previous work it has been checked
for the optimal recovery at finitely many points.

\begin{figure}[ht]	
	\centering
	\includegraphics[height = 5.5cm, width=0.75 \textwidth]{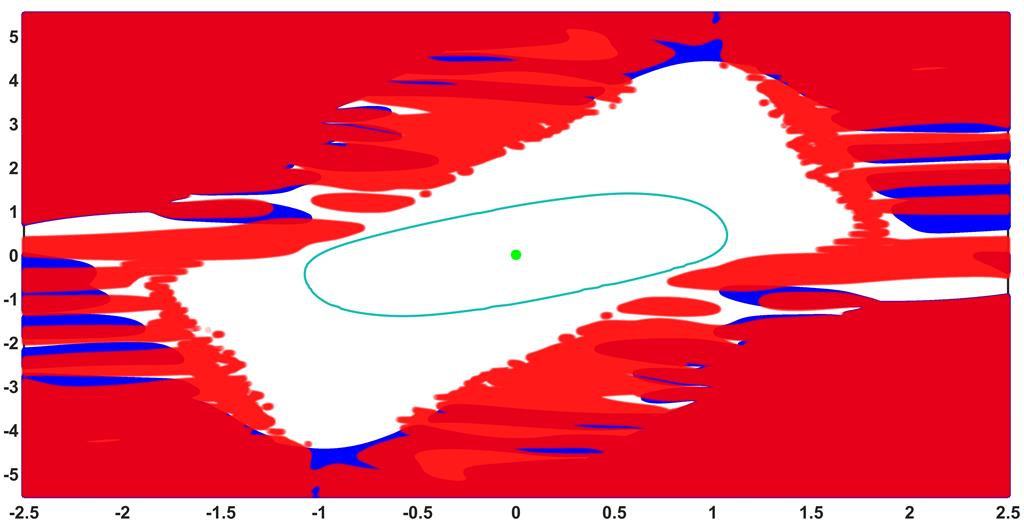}
	\caption{{\footnotesize The suitable area suggested by the method for the contraction metric is in white. The curve inside the white area indicates a positively invariant set, which is a sublevel-set of a computed Lyapunov-like function.
}}\label{fig2_pol}
\end{figure}

 To apply Theorem \ref{th:1} to establish the existence of a unique equilibrium, that then is  necessarily exponentially stable, we additionally need a positively invariant set within the area where the conditions of the
 contraction metric are fulfilled.  To compute such an area we used an approach similar to \cite{GiHa2015combi} and computed a numerical solution to $\nabla V(x)\cdot f(x)=-\sqrt{\delta^2+\|f(x)\|_2^2}$, $x\in\R^2$, using the RBF method, and with $f$ from \eqref{pol} and $\delta=10^{-4}$. Note that an approximate solution will not have negative orbital derivative near the equilibrium, since at the equilibrium $f(x)=0$, see \cite{rbf2007giesl}, so is not a Lyapunov function. However, if the approximation is sufficiently good, then it will have negative orbital derivative outside a neighborhood of the equilibrium. We thus can use
  CPA verification to assert that its orbital derivative is truly negative and then use this information together with level-sets of the computed function $V$ to determine a positively invariant set within the area where
 the metric $P$ is a contraction metric.  We used the same collocation points as above, a kernel given by the Wendland's function $ \psi_{5,3}(cr)=(1-cr)_+^{8} \big(32 (c r)^3+25 (cr)^2 + 8cr+1 \big)$, and $c=0.5$.
 We then used a subsequent CPA verification on a regular $500\times 500$ grid on $[-2.5,2.5]^2$.\\
In Figure \ref{fig2_pol}, we have drawn the largest level set of the computed Lyapunov-like function $V$  that fulfills two conditions:
it is inside of  the area where $P$ is a contraction metric and the level set is in the area where $V$ has negative orbital derivative.
Hence, this sublevel-set is necessarily positively invariant; for more information see \cite[Section 10.XV]{wwode1998}.

\subsection{Speed Control}
As the second example let us consider the system
\begin{eqnarray}
\left\{\begin{array}{lcl}	\dot{x}&=&y\\
	\dot{y}&=& -K_d \, y -x - g x^2 \left( \dfrac{y}{K_d} + x + 1 \right)\end{array}\right.\label{speed control}
\end{eqnarray}
with $ K_d = 1 $ and $g = 6$. The system has two asymptotically stable equilibria $x_0 = (0, 0) $ and $(-0.7887, 0)$,
and the saddle $(-0.2113, 0)$.

\begin{figure}[ht]	
\centering
	\includegraphics[height= 5 cm, width=0.50 \textwidth]{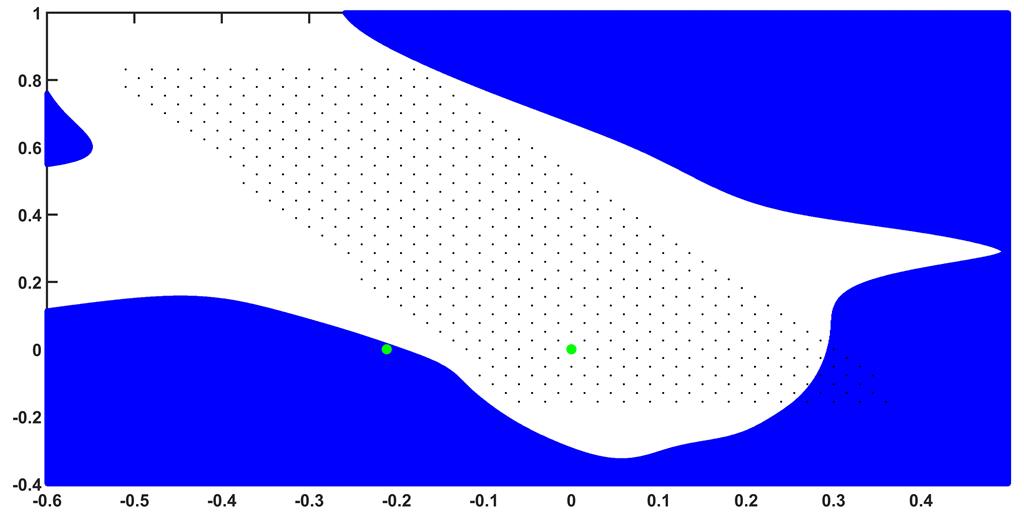}%
	\includegraphics[height= 5 cm, width=0.50 \textwidth]{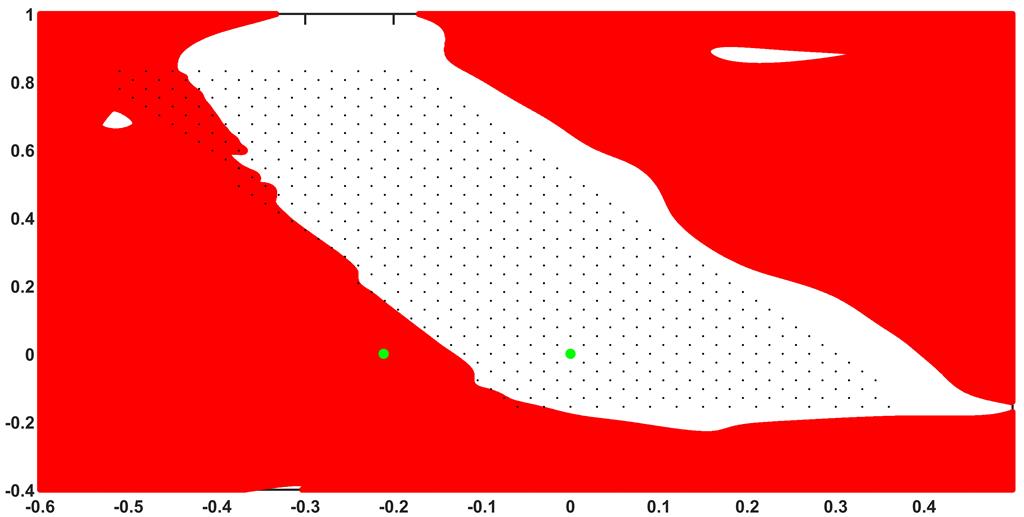}
	\caption{{\footnotesize The black dots show collocation points. The blue stars indicate the vertices where $P(x)$ is not positive definite (left). The red circles indicate the vertices where \textit{Constraints 4} of the Verification Problem \ref{VP} are not satisfied (right), and the green circles are the equilibrium points of the system.}}\label{fig1_speed control}
\end{figure}

The system fails to reach the demanded speed which corresponds to the equilibrium $(0, 0)$ for some inputs since the basin of attraction of $x_0 = (0, 0)$ is not the whole phase space, see \cite[Section 6.1]{rbf2007giesl} for more details.
We provided two sets of collocation points for two equilibria; firstly, $ N = 547 $ points as a hexagonal grid with
$$ X=0.030 \cdot \Z^2 \cap \{(x,y)\in\R^2\,:\, -0.18 \leq y \leq 0.85,  -2.11x-0.3 \leq y \leq -1.79\,x+0.54\}, $$
again with Wendland's function $ \psi_{6,4}(cr)$ and  $c=0.9$.  The triangulation was created over the area $ [-0.6,~0.5] \times [-0.4,~1] $ with $1400^2$ vertices, see Figure \ref{fig1_speed control}.

As in the previous example we computed a solution to $\nabla V(x)\cdot f(x)=-\sqrt{\delta^2+\|f(x)\|^2_2}$ using the RBF method with a subsequent CPA verification.  The procedure and the parameters were the same, the only difference being that we triangulated $[-0.4,0.5]\times[-0.2,0.8]$ for the CPA interpolation.
The results are shown in Figure \ref{fig2_speed control}. The  set inside the white area is a positively invariant set and therefore contains exactly one exponentially stable equilibrium.

\begin{figure}[ht]	
	\centering
	\includegraphics[height = 4.8cm, width=0.80 \textwidth]{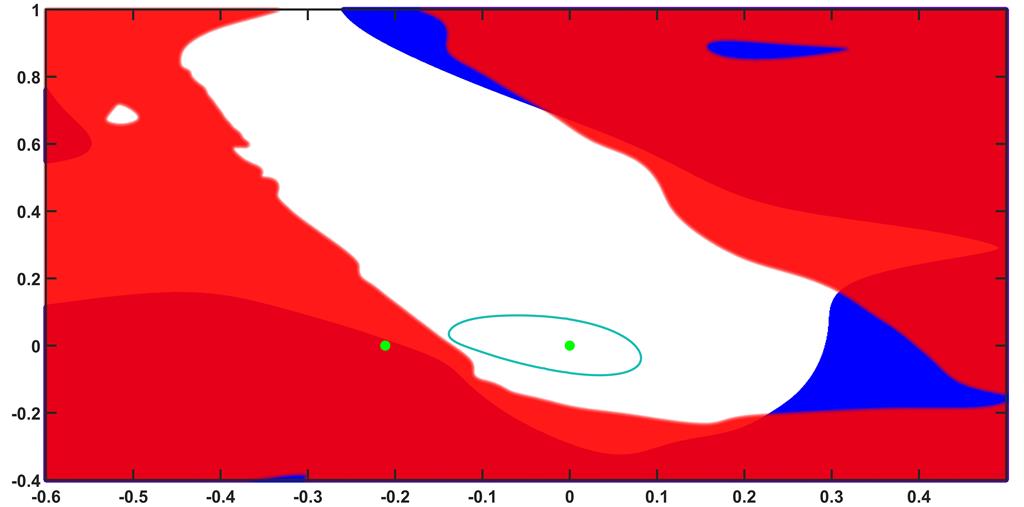}
	\caption{{\footnotesize The suitable area suggested by the method for the contraction metric is in white. The curve inside the white area indicates a positively invariant set, which is a sublevel-set of a computed Lyapunov-like function around the equilibrium at $ (0,0) $.}}\label{fig2_speed control}
\end{figure}

Secondly, $ N = 667 $ collocation points are used for the hexagonal grid around the $(-0.7887, 0)$ equilibrium point, together with triangulation of the area  $ [-1.4,~0] \times [-0.4,~0.4] $ with $1400^2$ vertices, see Figure \ref{fig3_speed control}.
At this equilibrium point, the level sets of a Lyapunov-like function used to estimate the basin of attraction are expanded along the suggested area by our method. Hence, we get a larger positively invariant set.  The Lyapunov function was computed as described above, now with the triangulation for the CPA interpolation on $[-1.4,-0.2]\times[-0.4,0.4]$.

\begin{figure}[ht]	
	\centering
	\includegraphics[height = 4cm,width=0.32 \textwidth]{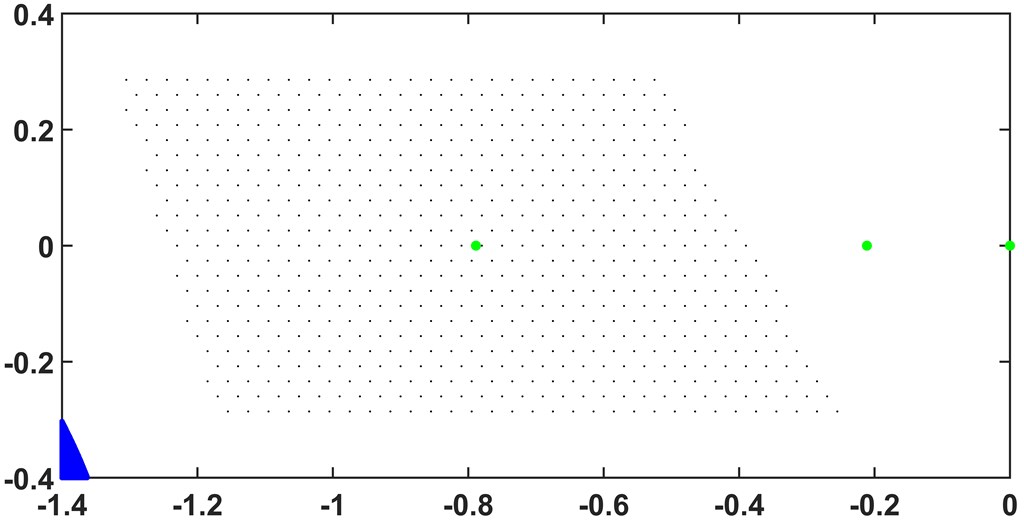}
	\includegraphics[height = 4cm,width=0.32 \textwidth]{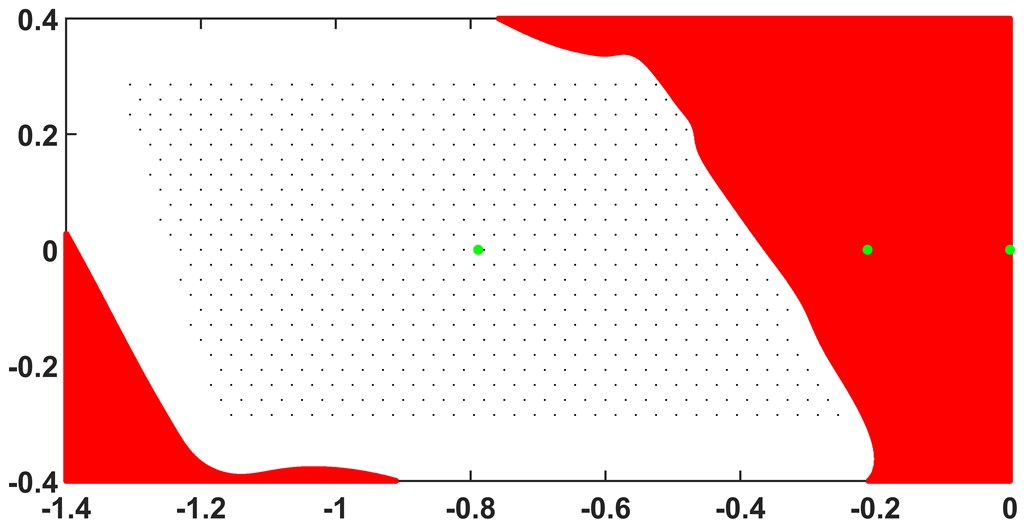}
	\includegraphics[height = 4cm,width=0.32 \textwidth]{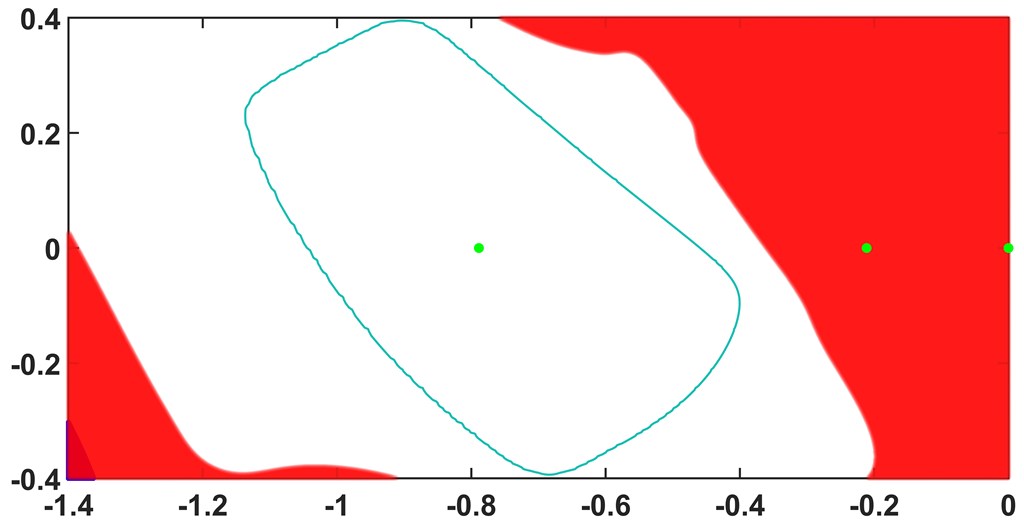}
	\caption{{\footnotesize The black dots show collocation points,
 the blue stars indicate the vertices where $P(x)$ is not positive definite (left), and	the red circles indicate the vertices where \textit{Constraints 4} of the Verification Problem \ref{VP} are not satisfied  (middle). The curve inside the white area indicates the boundary of a positively invariant set (right). The green circles are the equilibrium points of the system.}}\label{fig3_speed control}
\end{figure}

To illustrate the advantage of a contraction metric to, e.g., a Lyapunov function, we now consider the perturbed speed control system with the same parameters $K_d=1$ and $g=6$ as before
\begin{eqnarray}
\left\{\begin{array}{lcl}	\dot{x}&=&y + \epsilon \\
	\dot{y}&=& -y -x -6 (x^2 + \epsilon) \big( y + x + 1 \big),\end{array}\right.\label{speed control-pert}
\end{eqnarray}
first with a small perturbation $ \epsilon = 0.01 $, and then with a large perturbation $ \epsilon = 0.1$. The new system has
three equilibria at $(-0.1359,-0.01)$, $( -0.0780,-0.01)$ and $(-0.776,-0.01)$ for $\epsilon=0.01$ and
only one equilibrium point at $(-0.6648, -0.1)$ for $\epsilon=0.1$. The numerical results (see Figures \ref{fig-speed control pert}, \ref{fig-speed control pert2}) show that our method is robust with respect to perturbations.

\begin{figure}[ht]	
\centering
	\includegraphics[height = 4cm,width=0.45 \textwidth]{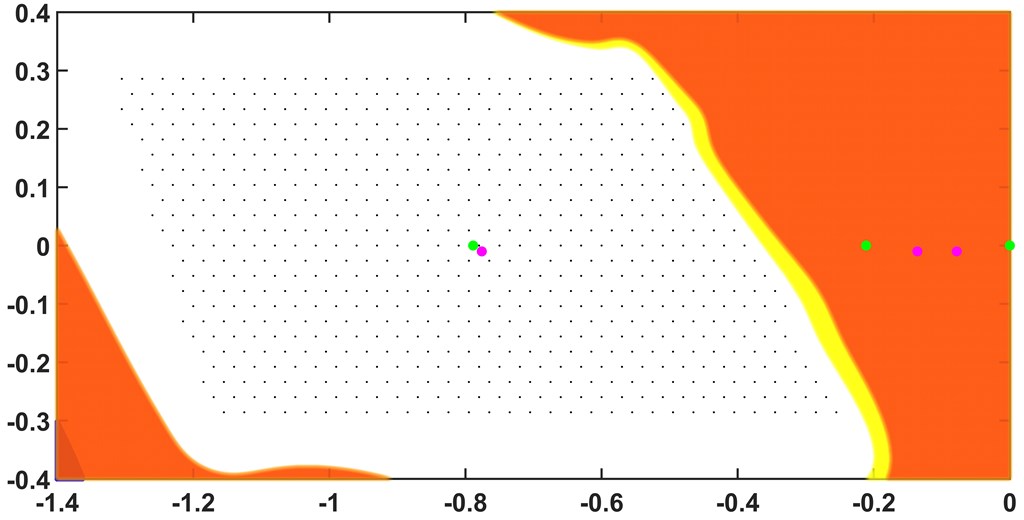}
	\includegraphics[height = 4cm,width=0.45 \textwidth]{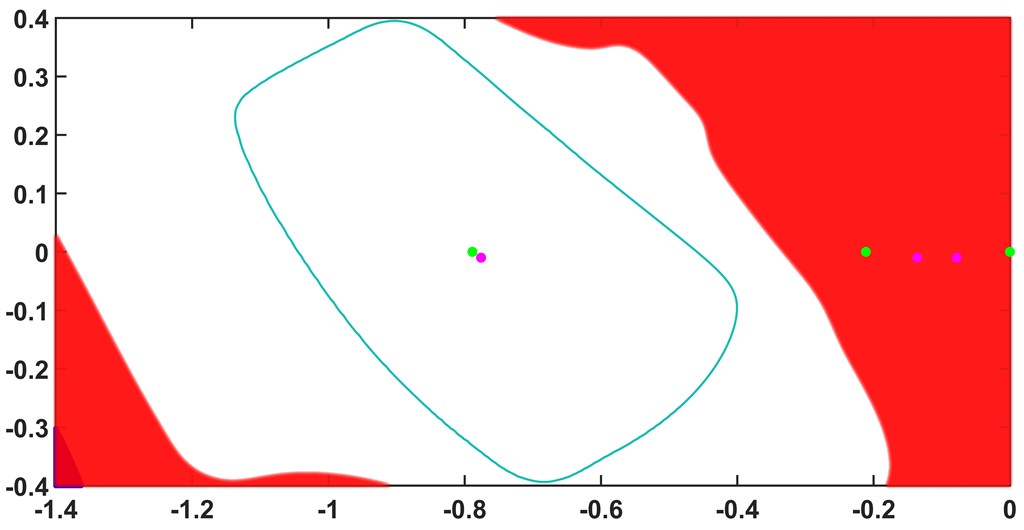}
	\caption{{\footnotesize Small perturbation. The black dots show collocation points, and the blue stars indicate the vertices where $P(x)$ is not positive definite. The vertices where \textit{Constraints 4} of the Verification Problem \ref{VP} are not satisfied are colored in yellow for the original system and in red for the perturbed one (left). The maximum level set of a Lyapunov function inside the suitable area is given by elliptic-like curve (right). The equilibrium points of the original and perturbed system ($\epsilon = 0.01$) are represented by green and magenta circles, respectively.
Note that the results are almost identical to the unperturbed system in Figure \ref{fig3_speed control}, although the equilibria are displaced.
	 }}\label{fig-speed control pert}
\end{figure}

For both the small and the large perturbation we use the same contraction metric  as in the unperturbed system. We can see in left-side plots of Figures \ref{fig-speed control pert}, \ref{fig-speed control pert2} that the metric satisfies the constraints in a very similar area as before for both $ \epsilon = 0.01 $ and $ \epsilon = 0.1 $. However, while the same Lyapunov-like function can be used to determine a positively invariant set for the perturbed system when $ \epsilon = 0.01 $ (see Figure \ref{fig-speed control pert}), we needed to calculate a new Lyapunov-like function for $\epsilon = 0.1$.

\begin{figure}[ht]	
\centering
	\includegraphics[height = 4cm,width=0.45 \textwidth]{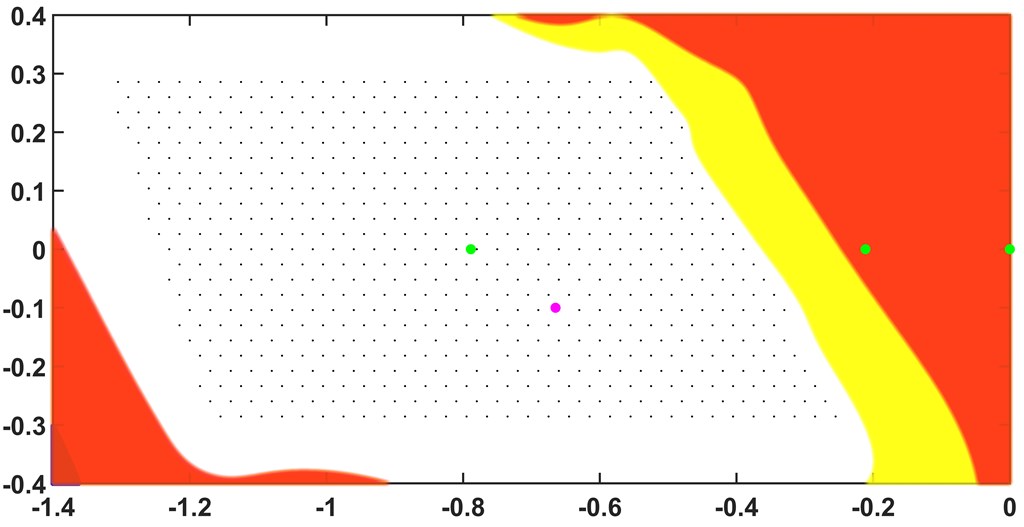}
	\includegraphics[height = 4cm,width=0.45 \textwidth]{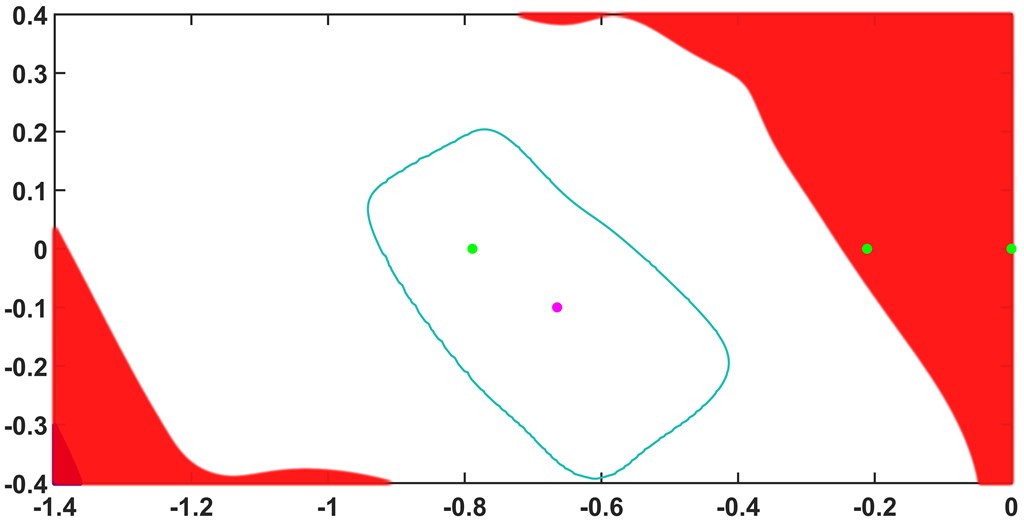}
	\caption{{\footnotesize Large perturbation.  
The black dots show collocation points, and the blue stars (left-down in the background of plots) indicate the vertices where $P(x)$ is not positive definite. The vertices where \textit{Constraints 4} of the Verification Problem \ref{VP} are not satisfied are colored in yellow for the original system and in red for the perturbed one (left). The maximum level set of a Lyapunov function inside the suitable area is given by elliptic-like curve (right). The equilibrium points of the original and perturbed system ($\epsilon = 0.1$) are represented by green and magenta circles, respectively.
Note that the contraction metric for the unperturbed system in Figure \ref{fig3_speed control} is still valid for this system, although the equilibrium has moved a considerable distance.
	 }}\label{fig-speed control pert2}
\end{figure}

\section{Conclusion}

In this paper we have combined two methods to construct and verify a contraction metric for an equilibrium. A contraction metric is a tool to show the stability of an equilibrium and to determine a subset of its basin of attraction. The advantage is that it is robust with respect to perturbations of the dynamical system, including perturbing the position of the equilibrium.

We have combined the RBF method, which is fast and constructs a contraction metric by approximately solving a matrix-valued PDE with meshfree collocation, with the CPA method, which interpolates the RBF metric by a continuous function, which is affine on each simplex of a fixed triangulation. The CPA method includes a rigorous verification that the computed metric is in fact a contraction metric. The new combined method is as fast as the RBF method, but also includes a rigorous verification, which was missing in the original RBF method. We have also shown in the paper that this combined method always succeeds in rigorously constructing a contraction metric by making the set of collocation points and the triangulation finer and finer.

When compared to other methods to determine the basin of attraction of an equilibrium, e.g. Lyapunov functions, the computation of a contraction metric is computationally more demanding as we construct a matrix-valued function, but it is robust with respect to perturbations of the system. One can combine these two approaches by first computing a   Lyapunov function, which will have a strictly negative orbital derivative in some areas, but will exhibit some areas, where the orbital derivative is non-negative. If a sub-level set of the Lyapunov function covers this area, then we have found a positively invariant set and can then apply the method of this paper to prove that there is a unique equilibrium in this sub-level set, and the sub-level set is part of its basin of attraction.

%
%

\label{References}
\begin{small}
\bibliographystyle{amsplain-nodash}
\bibliography{mainbibadapted} 

\providecommand{\bysame}{\leavevmode\hbox to3em{\hrulefill}\thinspace}
\providecommand{\MR}{\relax\ifhmode\unskip\space\fi MR }
\providecommand{\MRhref}[2]{%
  \href{http://www.ams.org/mathscinet-getitem?mr=#1}{#2}
}
\providecommand{\href}[2]{#2}
\begin{thebibliography}{10}

\bibitem{Aghannan2003}
N.~Aghannan and P.~Rouchon., \emph{An intrinsic observer for a class of
  {L}agrangian systems}, IEEE Trans. Automat. Control \textbf{48} (2003),
  936--944.

\bibitem{AnPaXXX}
J.~Anderson and A.~Papachristodoulou, \emph{Advances in computational
  {L}yapunov analysis using sum-of-squares programming}, Discrete Contin. Dyn.
  Syst. Ser. B \textbf{20} (2015), no.~8, 2361--2381.

\bibitem{Aylward2008}
E.~Aylward, P.~Parrilo, and J.-J. Slotine, \emph{Stability and robustness
  analysis of nonlinear systems via contraction metrics and {SOS} programming},
  Automatica \textbf{44} (2008), 2163--2170.

\bibitem{baier2012linear}
R.~Baier, L.~Gr{\"u}ne, and S.~Hafstein, \emph{Linear programming based
  {L}yapunov function computation for differential inclusions}, Discrete
  Contin. Dyn. Syst. Ser. B \textbf{17} (2012), no.~1, 33--56.

\bibitem{BGHKLXXXXmultiattr}
J.~Bj\"ornsson, P.~Giesl, S.~Hafstein, C.~Kellett, and H.~Li, \emph{Computation
  of {L}yapunov functions for systems with multiple attractors}, Discrete
  Contin. Dyn. Syst. Ser. A \textbf{35} (2015), no.~9, 4019--4039.

\bibitem{chesibook}
G.~Chesi, \emph{Domain of {A}ttraction: {A}nalysis and {C}ontrol via {SOS}
  {P}rogramming}, Springer, 2011.

\bibitem{rbf2007giesl}
P.~Giesl, \emph{Construction of global {L}yapunov functions using radial basis
  functions}, Lecture Notes in Mathematics, vol. 1904, Springer-Verlag, Berlin,
  2007.

\bibitem{contr2015giesl}
P.~Giesl, \emph{Converse theorems on contraction metrics for an equilibrium},
  J. Math. Anal. Appl. (2015), no.~424, 1380--1403.

\bibitem{GiHa2013CPAmetric}
P.~Giesl and S.~Hafstein, \emph{Construction of a {CPA} contraction metric for
  periodic orbits using semidefinite optimization}, Nonlinear Anal. \textbf{86}
  (2013), 114--134.

\bibitem{Giesl2014}
P.~Giesl and S.~Hafstein, \emph{Revised {CPA} method to compute {L}yapunov
  functions for nonlinear systems}, J. Math. Anal. Appl. \textbf{410} (2014),
  292--306.

\bibitem{GiHa2015combi}
P.~Giesl and S.~Hafstein, \emph{Computation and verification of {L}yapunov
  functions}, SIAM Journal on Applied Dynamical Systems \textbf{14} (2015),
  no.~4, 1663--1698.

\bibitem{giesl2018kernel}
P.~Giesl and H.~Wendland, \emph{Kernel-based discretization for solving
  matrix-valued {PDE}s}, SIAM J. Numer. Anal. \textbf{56} (2018), no.~6,
  3386--3406.

\bibitem{giesl-cont-1}
P.~Giesl and H.~Wendland, \emph{Construction of a contraction metric by
  meshless collocation}, Discrete Contin. Dyn. Syst. Ser. B \textbf{24} (2019),
  no.~8, 3843--3863.

\bibitem{Haf2004exex}
S.~Hafstein, \emph{A constructive converse {L}yapunov theorem on exponential
  stability}, Discrete Contin. Dyn. Syst. \textbf{10} (2004), no.~3, 657--678.

\bibitem{HaKa2019datalimit}
S.~Hafstein and C.~Kawan, \emph{Numerical approximation of the data-rate limit
  for state estimation under communication constraints}, J. Math. Anal. Appl.
  \textbf{473} (2019), no.~2, 1280--1304.

\bibitem{hafstein2017study}
S~Hafstein and A~Valfells, \emph{Study of dynamical systems by fast numerical
  computation of {L}yapunov functions}, Proceedings of the 14th International
  Conference on Dynamical Systems: Theory and Applications (DSTA), Mathematical
  and Numerical Aspects of Dynamical System Analysis, vol. Mathematical and
  Numerical Aspects of Dynamical System Analysis Lodz, Poland, 2017,
  pp.~229--240.

\bibitem{HaVa2019intLya}
S.~Hafstein and A.~Valfells, \emph{Efficient computation of {L}yapunov
  functions for nonlinear systems by integrating numerical solutions},
  Nonlinear Dynamics (To be published 2019).

\bibitem{hahn}
W.~Hahn, \emph{Stability of motion}, Springer, Berlin, 1967.

\bibitem{iske2}
A.~Iske, \emph{Perfect centre placement for radial basis function methods},
  Tech. Report TUM-M9809, TU Munich, Germany, 1998.

\bibitem{Johansen2000collo}
T.~Johansen, \emph{Computation of {L}yapunov functions for smooth, nonlinear
  systems using convex optimization}, Automatica \textbf{36} (2000),
  1617--1626.

\bibitem{JGD1999cpajulian}
P.~Julian, J.~Guivant, and A.~Desages, \emph{A parametrization of piecewise
  linear {L}yapunov functions via linear programming}, Int. J. Control
  \textbf{72} (1999), no.~7-8, 702--715.

\bibitem{KaPeXXXXalttosos}
R.~Kamyar and M.~Peet, \emph{Polynomial optimization with applications to
  stability analysis and control -- an alternative to sum of squares}, Discrete
  Contin. Dyn. Syst. Ser. B \textbf{20} (2015), no.~8, 2383--2417.

\bibitem{Khalil2002}
H.~Khalil, \emph{Nonlinear systems}, 3. ed., Pearson, 2002.

\bibitem{krasovskii}
N.~N. Krasovskii, \emph{Problems of the theory of stability of motion}, Mir,
  Moskow, 1959, English translation by Stanford University Press, 1963.

\bibitem{lohmiller}
W.~Lohmiller and J.-J. Slotine, \emph{On contraction analysis for non-linear
  systems}, Automatica \textbf{34} (1998), 683--696.

\bibitem{lyapunovbook1907endtrans}
A.~M. Lyapunov, \emph{The general problem of the stability of motion},
  Internat. J. Control \textbf{55} (1992), no.~3, 521--790, Translated by A. T.
  Fuller from {\'E}douard Davaux's French translation (1907) of the 1892
  Russian original, With an editorial (historical introduction) by Fuller, a
  biography of Lyapunov by V. I. Smirnov, and the bibliography of Lyapunov's
  works collected by J. F. Barrett, Lyapunov centenary issue.

\bibitem{Mar2002cpa}
S.~Marin\'osson, \emph{Lyapunov function construction for ordinary differential
  equations with linear programming}, Dynamical Systems: An International
  Journal \textbf{17} (2002), 137--150.

\bibitem{Papachristodoulou2013}
A.~Papachristodoulou, J.~Anderson, G.~Valmorbida, S.~Pranja, P.~Seiler, and
  P.~Parrilo, \emph{{SOSTOOLS}: Sum of squares optimization toolbox for
  {MATLAB}}, version 3.00 ed., User's guide, 2013.

\bibitem{sosphdthesis2000parrilo}
P.~Parrilo, \emph{Structured semidefinite programs and semialgebraic geometry
  methods in robustness and optimiziation}, PhD thesis: California Institute of
  Technology Pasadena, California, 2000.

\bibitem{RaSh2010branchrelaxlya}
S.~Ratschan and Z.~She, \emph{Providing a basin of attraction to a target
  region of polynomial systems by computation of {L}yapunov-like functions},
  SIAM J. Control Optim. \textbf{48} (2010), no.~7, 4377--4394.

\bibitem{Vannelli1985}
A.~Vannelli and M.~Vidyasagar, \emph{Maximal {L}yapunov functions and domains
  of attraction for autonomous nonlinear systems}, Automatica \textbf{21}
  (1985), no.~1, 69--80.

\bibitem{vidya2002}
M.~Vidyasagar, \emph{Nonlinear system analysis}, 2. ed., Classics in applied
  mathematics, SIAM, 2002.

\bibitem{wwode1998}
W.~Walter, \emph{Ordinary differential equation}, Springer, 1998.

\bibitem{Wendland1998}
H.~Wendland, \emph{Error estimates for interpolation by compactly supported
  {R}adial {B}asis {F}unctions of minimal degree}, J. Approx. Theory
  \textbf{93} (1998), 258--272.

\bibitem{wendland2005scattered}
H.~Wendland, \emph{Scattered data approximation}, vol.~17, Cambridge university
  press, 2005.

\bibitem{zubovbook1964}
V.~I. Zubov, \emph{Methods of {A}. {M}. {L}yapunov and their application},
  Translation prepared under the auspices of the United States Atomic Energy
  Commission; edited by Leo F. Boron, P. Noordhoff Ltd, Groningen, 1964.

\end{thebibliography}
\end{small}
\addcontentsline{toc}{chapter}{References}

\end{document}